\documentclass[a4paper]{amsart}

\usepackage{hyperref}
\usepackage{cleveref}
\usepackage{amsmath,amsthm}
\usepackage{amsfonts,amssymb,amscd}
\usepackage{graphicx}
\usepackage[margin=3cm]{geometry}
\usepackage{float}
\usepackage{cases}
\usepackage[utf8]{inputenc}

\numberwithin{equation}{section}

\newcommand*\diff{\mathop{}\!\mathrm{d}}

\DeclareMathOperator{\spann}{\mathrm{span\,}}
\newcommand{\circlenum}[1]{\raisebox{.5pt}{\textcircled{\raisebox{-.9pt} {#1}}}}

\newtheorem{theo}{Theorem}
\newtheorem{lemm}{Lemma}[section]

\newtheorem{prop}{Proposition}[section]
\newtheorem{fact}{Fact}[section]
\newtheorem*{coro}{Corollary}

\theoremstyle{definition}
\numberwithin{defi}{section}

\theoremstyle{remark}
\newtheorem*{rema}{Remark}

\title[Minimal surfaces with planar curvature lines]{Deformation of minimal surfaces with planar curvature lines}
\author{Joseph Cho}
\author{Yuta Ogata}

\date{}
\subjclass[2010]{Primary 53A10; Secondary 53A15.}
\keywords{planar curvature line, minimal surface, affine minimal surface}

\address{\newline Department of mathematics, Faculty of science\newline Kobe university\newline Rokkodai-cho 1-1, Nada-ku, Kobe-shi, Hyogo-ken, 657-8501\newline Japan}
\email{jcho@stu.kobe-u.ac.jp}
\email{ogata@math.kobe-u.ac.jp}
\begin{document}

\begin{abstract}
Minimal surfaces with planar curvature lines are classical geometric objects, having been studied since the late 19th century. In this paper, we revisit the subject from a different point of view. After calculating their metric functions using an analytical method, we recover the Weierstrass data, and give clean parametrizations for these surfaces. Then, using these parametrizations, we show that there exists a single continuous deformation between all minimal surfaces with planar curvature lines. In the process, we establish the existence of axial directions for these surfaces.
\end{abstract}

\maketitle
\normalsize

\section{Introduction}
The study of minimal surfaces with planar curvature lines is a classical subject, having been studied by Bonnet, Enneper, and Eisenhart in the late 19th century as recorded in \cite{bonnet1855}, \cite{eisenhart1909}, and \cite{enneper1878}, respectively. The subject was further studied by Nitsche in \cite{nitsche1989}, where he gave a full classification of such surfaces. Nitsche showed that families of planar curvature lines transform into orthogonal families of circles on the unit sphere $\mathbb{S}^2$ under the Gauss map, and by analyzing orthogonal systems of circles, he recovered the data for the following well-known theorem given by Weierstrass in \cite{weierstrass1866}.
\begin{fact}[Weierstrass representation theorem]
Any minimal surface $X : \Sigma \subset \mathbb{C} \to \mathbb{R}^3$ can be locally represented as
$$X = \,\mathrm{Re}\int \left(1 - h^2, i(1 + h^2), 2 h\right)\eta\,\diff z$$
over a simply-connected domain $\Sigma$ on which $h$ is meromorphic, while $\eta$ and $h^2\eta$ are holomorphic.
\end{fact}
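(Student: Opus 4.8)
The plan is to recast the problem in terms of holomorphic isotropic data. First I would pass to isothermal (conformal) parameters $z = x + iy$ on $\Sigma$, in which the induced metric becomes $\lambda^2(\diff x^2 + \diff y^2)$; the existence of such parameters is the classical solvability of the underlying Beltrami (Cauchy--Riemann) system and can be quoted. In these coordinates one has $\Delta X = 2\lambda^2 H \mathbf{n}$, so minimality $H \equiv 0$ is equivalent to each component of the position vector $X$ being harmonic. The central object is then the complex derivative $\phi := X_z = \tfrac{1}{2}(X_x - i X_y)$, a $\mathbb{C}^3$-valued map.

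Next I would extract the two algebraic identities that force the representation. Harmonicity of $X$ makes each $\phi_j$ holomorphic, conformality of the parametrization is precisely the isotropy relation $\phi_1^2 + \phi_2^2 + \phi_3^2 = 0$, and regularity of the immersion gives $|\phi|^2 = \tfrac{1}{2}\lambda^2 \neq 0$, so that $\phi$ never vanishes. Setting $\eta := \phi_1 - i\phi_2$ and $h := \phi_3/\eta$, the isotropy relation factors as $(\phi_1 - i\phi_2)(\phi_1 + i\phi_2) = -\phi_3^2$, giving $\phi_1 + i\phi_2 = -h^2\eta$ and hence the normalization $\phi = \tfrac{1}{2}(1 - h^2,\, i(1 + h^2),\, 2h)\,\eta$. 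Since $\diff X = 2\,\mathrm{Re}(\phi\,\diff z)$, the factor of two cancels the $\tfrac{1}{2}$ and the integrand of the theorem is exactly $2\phi$.

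Then I would settle the function-theoretic claims. As $\eta$ is holomorphic and, away from the degenerate planar case $\eta \equiv 0$ (where $\phi_3 \equiv 0$ and a rotation of $\mathbb{R}^3$ restores genericity), not identically zero, the quotient $h = \phi_3/\eta$ is meromorphic; crucially, $h^2\eta = \phi_3^2/\eta = -(\phi_1 + i\phi_2)$ is holomorphic even across the zeros of $\eta$. Finally, each $\phi_j\,\diff z$ is a closed holomorphic one-form, so on the simply-connected $\Sigma$ the integral $\mathrm{Re}\int (1 - h^2, i(1 + h^2), 2h)\,\eta\,\diff z$ is path-independent by Cauchy's theorem and recovers $X$ up to translation.

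I expect the only genuine subtlety to lie at the zeros of $\eta$: one must confirm that the apparent poles of $h$ there are true poles rather than essential singularities and that $h^2\eta$ remains holomorphic, so that $(h,\eta)$ are bona fide Weierstrass data and not a merely formal expression. The factored isotropy relation resolves this by pinning the vanishing order of $\phi_3$ to that of $\eta$. The deepest analytic input, the existence of isothermal coordinates, I would not reprove but cite as classical.
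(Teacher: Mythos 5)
Your derivation is correct and is the standard proof of the local Weierstrass representation: harmonicity of the coordinate functions from $H\equiv 0$ in isothermal parameters, the isotropy and nonvanishing of $\phi=X_z$, the factorization $(\phi_1-i\phi_2)(\phi_1+i\phi_2)=-\phi_3^2$ yielding $(h,\eta)$, and path-independence on a simply-connected domain; the handling of the zeros of $\eta$ and of the degenerate case $\eta\equiv 0$ is also right. The paper itself states this result as a Fact quoted from Weierstrass and gives no proof, so there is nothing to compare against; your argument fills that gap in the expected classical way.
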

\noindent Using the Weierstrass data $(h, \eta \diff z)$ and the representation, Nitsche classified different types of minimal surfaces with planar curvature lines, stated here along with their respective Weierstrass data.
\begin{fact}
A minimal surface in Euclidean space $\mathbb{R}^3$ with planar curvatures lines must be a piece of one, and only one, of
	\begin{itemize}
		\item plane $(0, 1 \diff z)$,
		\item catenoid $(e^z, e^{-z} \diff z)$,
		\item Enneper surface $(z, 1 \diff z)$, or
		\item a surface in the Bonnet family $\{(e^z + t, e^{-z} \diff z), t > 0\}$
	\end{itemize}
up to isometries and homotheties of $\mathbb{R}^3$.
\end{fact}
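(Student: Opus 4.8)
The plan is to work in conformal curvature-line coordinates, translate the planarity hypothesis into a PDE for the conformal factor, solve it, and read off the Weierstrass data. First I would note that away from umbilic points (which are isolated, being zeros of the Hopf differential) one may choose conformal coordinates $z = u+iv$ whose coordinate curves are the lines of curvature. In such coordinates the metric is $e^{2\omega}(du^2+dv^2)$, and since the surface is minimal the Hopf differential $\phi\,\diff z^2$ is holomorphic; the curvature-line condition forces $\phi$ to be real, hence a real constant, which after a homothety I normalize to $\phi \equiv -2$. The two principal curvatures are then $\pm e^{-2\omega}$, and the Gauss equation becomes the Liouville-type equation $\omega_{uu}+\omega_{vv} = e^{-2\omega}$. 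The totally geodesic case $\phi \equiv 0$ is set aside at once as the plane $(0,1\,\diff z)$.

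Next I would encode \emph{planar curvature lines}. Along a line of curvature the geodesic torsion vanishes, so by the Darboux frame its torsion as a space curve equals $\tfrac{d}{ds}\arctan(\kappa_n/\kappa_g)$; the curve is planar precisely when the ratio $\kappa_n/\kappa_g$ is constant along it. Computing the geodesic curvatures of the coordinate curves ($\mp e^{-\omega}\omega_v$ and $\pm e^{-\omega}\omega_u$) and the normal curvatures ($\pm e^{-2\omega}$), the planarity of \emph{both} families becomes the pair of conditions $\partial_u\!\big(e^{-\omega}/\omega_v\big)=0$ and $\partial_v\!\big(e^{-\omega}/\omega_u\big)=0$. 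Integrating these shows that the metric function separates as a sum, $e^{\omega} = A(u) + B(v)$.

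Then I would substitute $e^\omega = A+B$ into the Liouville equation; after clearing denominators and applying $\partial_u\partial_v$ the variables separate, giving $A''' = c\,A'$ and $B''' = -c\,B'$ for a constant $c$. Solving these linear ODEs and exploiting the residual freedom to translate $u,v$ and to rescale (a homothety) yields a short normal-form list: for $c=0$ both $A$ and $B$ are quadratic, producing the Enneper surface $(z,1\,\diff z)$; for $c\neq 0$ (say $c>0$, with $k=1$ after scaling) $A$ is a combination of $e^{\pm u}$ and $B$ a combination of $\cos v,\sin v$, and translating $v$ removes the $\sin$ term. Feeding these back into the full Liouville equation fixes the remaining constants and leaves exactly a one-parameter family, which I would recognize as the catenoid $(e^z,e^{-z}\diff z)$ (when the $\cos v$ term is absent) together with the Bonnet family $(e^z+t,e^{-z}\diff z)$ for $t>0$. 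Finally, from $e^\omega$ and the normalization $\phi=-2$ I would reconstruct the Weierstrass pair via the identities $E = 2|\eta|^2(1+|h|^2)^2$ and $\phi = -2h'\eta$, matching each normal form to the listed data.

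The bulk of the work, and the main obstacle, lies in two places. The first is getting the planarity conditions exactly right: the torsion computation in the Darboux frame must be carried out with consistent orientations, and one must verify that constancy of $\kappa_n/\kappa_g$ along each coordinate curve is genuinely equivalent to vanishing torsion (with the geodesic locus $\kappa_g=0$, where the planar curve degenerates to a straight line, treated as a limiting case). The second is the bookkeeping in the final stage — using the full Liouville equation, not merely its differentiated consequence, to pin down every integration constant, and confirming that the normal forms are mutually exhaustive while accounting for overlaps (e.g.\ $t=0$ collapsing the Bonnet data to the catenoid, and the fully degenerate constant case recovering the plane). The uniqueness clause ``one and only one'' I expect to be comparatively routine: the four families are separated by invariants such as total curvature and umbilic or symmetry structure, and within the Bonnet family the ratio of the $\cos v$-coefficient to the hyperbolic part of $e^\omega$ is a homothety invariant that distinguishes different values of $t$.
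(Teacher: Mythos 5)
Your proposal is correct in outline and, for its core, coincides with the paper's analytic method: reduce to isothermic coordinates with normalized real constant Hopf differential, couple the Liouville equation with a second PDE for $\omega$ coming from planarity, and solve by separation of variables. Your planarity condition is derived differently (vanishing torsion via the Darboux frame, i.e.\ constancy of $\kappa_n/\kappa_g$ along each curvature line) but lands on exactly the paper's Lemma \ref{lemm:min_planarCondition}: $\partial_u(e^{-\omega}/\omega_v)=0$ expands to $\omega_{uv}+\omega_u\omega_v=0$, equivalently $(e^\omega)_{uv}=0$, i.e.\ your ansatz $e^\omega=A(u)+B(v)$; the paper instead computes $\det(X_u,X_{uu},X_{uuu})=0$ from the Gauss--Weingarten equations, which avoids the orientation/degeneracy caveats you rightly flag for the Frenet argument. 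Your separated ODEs $A'''=cA'$, $B'''=-cB'$ are the derivatives of the paper's first-order system for $f=(e^\omega)_u$, $g=(e^\omega)_v$ (equations \eqref{eqn:min_ode}); the paper's closed form $e^\omega=(1+f^2+g^2)/(f_u+g_v)$ is precisely the ``feed back into the undifferentiated Liouville equation'' step that you defer to bookkeeping, so nothing is missing there, though you should be aware that this constant-fixing is where the one-parameter Bonnet family versus catenoid dichotomy actually gets pinned down (in the paper, via the sign conditions $c,d\ge 0$ of Lemmas \ref{lemm:min_fzeroCondition} and \ref{lemm:min_fgzero}). The genuine divergence is at the end: you identify the normal forms with the listed Weierstrass data by matching the induced metric and Hopf differential and invoking Bonnet rigidity, whereas the paper proves the existence of constant axial directions, uses them to compute the unit normal explicitly, and \emph{derives} $(h,\eta\diff z)$ by stereographic projection. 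Your route is shorter and perfectly adequate for the classification statement itself; the paper's longer route is what produces the explicit parametrization \eqref{eqn:min_parametrization} and the continuous deformation of Theorem \ref{theo:min_main}, which is the actual point of the paper. The only substantive items to tighten are the normalization consistency between your choice $\phi\equiv-2$ and the metric identity you quote, and an explicit argument for the ``only one'' clause (your proposed homothety invariant separating the Bonnet parameters $t$ is the right idea and should be carried out, since the paper itself leaves this to the cited literature).
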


\begin{figure}[H]
	\includegraphics{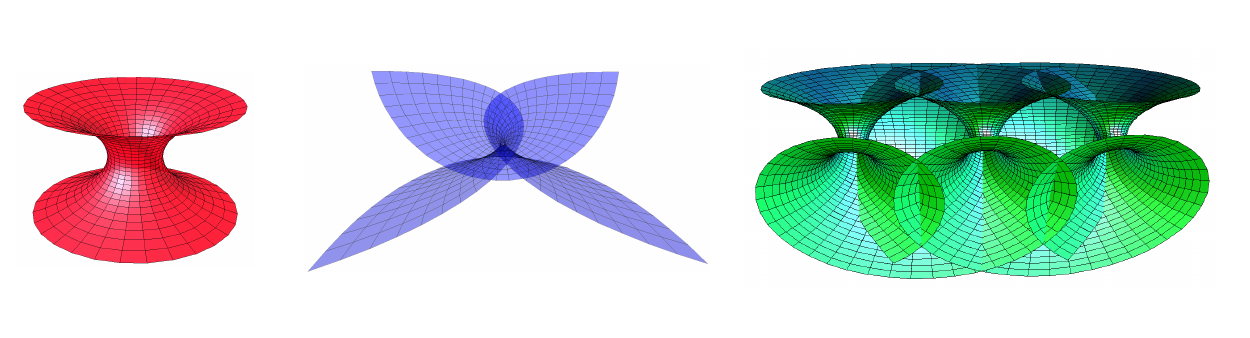}
	\caption{Examples of non-planar minimal surfaces with planar curvature lines: catenoid, Enneper surface, and a surface from the Bonnet family.}
	 \label{fig:surfaces}
\end{figure}

In fact, planes can also be considered as spheres with infinite radius. Thus, minimal surfaces with planar curvature lines can be thought of as a special case of minimal surfaces with spherical curvature lines. There are many works focused on such surfaces, including \cite{dobriner1887} and \cite{wente1992}.

On the other hand, Thomsen studied surfaces with zero mean curvature which are also affine minimal. In \cite{thomsen1923}, he classified such surfaces using the fact that the families of asymptotic lines of these surfaces transformed into orthogonal families of circles on the unit sphere under the Gauss map. He also mentioned that since the asymptotic lines correspond to the curvature lines of the conjugate surface, minimal surfaces that are also affine minimal are conjugate surfaces of minimal surfaces with planar curvature lines. Building on the result of Thomsen, Schaal showed that the plane and the Enneper surface can be attained as a limit of Thomsen surfaces in \cite{schaal1973}, and Barthel, Volkmer, and Haubitz were able join these surfaces into a one-parameter family of surfaces using the analytical approach in \cite{barthel1980}.

It is also possible to attain a deformation between minimal surfaces with planar curvature lines using Goursat transformation introduced in \cite{goursat1887} and \cite{goursat1887-1}. A Goursat transformation transforms minimal surfaces to minimal surfaces. In addition, the transformation not only maps curvature lines to curvature lines but also keeps the planar the curvature line condition as seen in \cite{hertrich-jeromin2001}, \cite{mladenov1999}, or \cite{pabel1994}. However, one may not transform a minimal surface into a plane via Goursat transformation alone.

Meanwhile, non-zero constant mean curvature (CMC) surfaces with planar curvature lines also has been studied extensively, as in \cite{abresch1987}, \cite{walter1987}, or \cite{wente1986}. In particular, Wente constructed non-trivial examples of compact CMC surfaces, called Wente tori in \cite{wente1986}. Then in \cite{abresch1987}, Abresch gave a classification of all CMC surfaces with planar curvature lines including the Wente tori and cylindrical ended bubbletons, by solving a system of partial differential equations. Similarly, in \cite{walter1987} Walter considered explicit parametrizations of Wente tori, by showing the existence a notion of axes.

In this paper, to classify minimal surfaces in $\mathbb{R}^3$ with planar curvature lines, we propose an alternate method to using orthogonal systems of circles. In particular, we utilize the method analogous to the approach taken in \cite{abresch1987}, \cite{barthel1980}, \cite{walter1987}, and \cite{wente1992}, modified for the subject at hand. In \Cref{sec:minimal}, we first obtain and solve system of partial differential equations describing the metric function, similar to the method used in \cite{abresch1987} and \cite{barthel1980}. Then, following \cite{walter1987} and \cite{wente1992}, we prove the existence of axial directions of these surfaces, using them to recover the Weierstrass data, and ultimately their parametrizations. In \Cref{sec:deformation}, we use the results from the previous section to obtain a single-parameter deformation of all minimal surfaces preserving the planar curvature line condition. Finally as the main result, we state the classification, parametrization, and deformation of all minimal surfaces with planar curvature lines (see Theorem \ref{theo:min_main} and \Cref{fig:deformation1}).

\section{Minimal Surfaces with Planar Curvature Lines in $\mathbb{R}^3$} \label{sec:minimal}
We wish to classify minimal surfaces with planar curvature lines and obtain their parametrizations by calculating the Weierstrass data. However, applying the planar curvature line condition directly to the Weierstrass data involves heavy calculation in a complex-analysis setting. Therefore, from the zero mean curvature condition and planar curvature line condition, we obtain a system of partial differential equations for the metric function. Then by applying the method analogous to the one employed by Abresch in \cite{abresch1987}, we solve the system of partial differential equations by transforming it into a system of ordinary differential equations. Finally, using explicit solutions for the metric function, we recover the Weierstrass data and the parametrization by calculating the unit normal vector, through an analogous approach to the one taken by Walter in \cite{walter1987}.

\subsection{Minimal surface theory.}\label{subsec:min_Abresch}
Let $\Sigma \subset \mathbb{R}^2$ be a simply-connected domain with coordinates $(u,v)$, and let $X: \Sigma \to \mathbb{R}^3$ be a conformally immersed surface. Since $X(u,v)$ is conformal,
$$\diff s^2 = e^{2\omega}(\diff u^2+ \diff v^2)$$
for some $\omega : \Sigma \to \mathbb{R}$. We choose the unit normal vector field $N:\Sigma\longrightarrow
\mathbb{S}^2$ of $X$, and then the mean curvature $H$ and Hopf differential $Q$ are
$$ H:=\frac{1}{2e^{2\omega}} \langle X_{uu}+X_{vv},N\rangle \quad\text{and}\quad  Q:=\frac{1}{4}\langle X_{uu}-2iX_{uv}-X_{vv},N\rangle. $$
Since we are interested in minimal surfaces, we let $H = 0$ and calculate the Gauss-Weingarten equations,
$$
	\begin{cases}
       	 	X_{uu} = \omega_u X_u -\omega_v X_v +(Q+\bar{Q})N\\
        		X_{vv} = -\omega_u X_u +\omega_v X_v -(Q+\bar{Q})N\\
        		X_{uv} = \omega_v X_u +\omega_u X_v +i(Q-\bar{Q})N\\
        		N_u = -e^{-2\omega}(Q+\bar{Q}) X_u-ie^{-2\omega}(Q-\bar{Q}) X_v\\
        		N_v = -ie^{-2\omega}(Q-\bar{Q}) X_u+e^{-2\omega}(Q+\bar{Q}) X_v,
	\end{cases}
$$
and the Gauss-Codazzi equation,
$$
	\Delta \omega - 4Q\bar{Q}e^{-2\omega} = 0 \quad\text{and}\quad Q_{\bar{z}}=0
$$
for $z:=u+iv$. Note that the Gauss-Codazzi equation is equivalent to the Hopf differential factor $Q$ being holomorphic. Moreover, the Gauss-Codazzi equation is invariant under the deformation $Q\mapsto\lambda^{-2}Q$ for $\lambda\in\mathbb{S}^1\subset\mathbb{C}$. In fact, when $X(u,v)$ is a minimal surface in $\mathbb{R}^{3}$, $\lambda\in\mathbb{S}^1$ allows us to create a single-parameter family of minimal surfaces $X^{\lambda}(u,v)$ associated to $X(u,v)$, called the \emph {associated family}. In particular if $\lambda^{-2} = i$, then the new surface is called the \emph{conjugate} surface of $X$.

Since the families of curvature lines of a plane are trivially planar, we may assume that $X(u,v)$ is not totally umbilic, and that $(u,v)$ are conformal curvature line (or isothermic) coordinates. Then we can normalize the Hopf differential such that $Q=-\frac{1}{2}$, and the relevant Gauss-Weingarten equations become
\begin{equation}\label{eqn:min_gaussW}
	\begin{cases}
       	 	X_{uu} = \omega_u X_u - \omega_v X_v - N\\
        		X_{vv} = - \omega_u X_u + \omega_v X_v + N\\
        		X_{uv} = \omega_v X_u + \omega_u X_v\\
        		N_u = e^{-2\omega} X_u\\
        		N_v = - e^{-2\omega} X_v
	\end{cases}
\end{equation}
where $k_1 = -e^{-2\omega}$ and $k_2 = e^{-2\omega}$ are the principle curvatures of $X$. Furthermore, the Gauss equation becomes the following Liouville equation:
$$\Delta \omega - e^{-2\omega} = 0.$$

On the other hand, as the following lemma shows, we may attain an additional partial differential equation regarding $\omega$ from the planar curvature line condition, allowing us to solve for $\omega$.
\begin{lemm}\label{lemm:min_planarCondition}
For non-planar umbilic-free minimal surfaces with isothermic coordinates $(u,v)$, the following statements are equivalent:		
	\begin{enumerate}
		\item $u$-curvature lines are planar.
		\item $v$-curvature lines are planar.
		\item $\omega_{uv} + \omega_u \omega_v = 0$.
	\end{enumerate}
\end{lemm}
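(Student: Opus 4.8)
The plan is to characterize planarity of a space curve through the vanishing of its torsion: for a regular curve $\gamma$ with nonvanishing curvature, lying in a plane is equivalent to $\det(\gamma',\gamma'',\gamma''')\equiv 0$. Since a $u$-curvature line is the curve $u\mapsto X(u,v_0)$, statement (1) amounts to $\det(X_u,X_{uu},X_{uuu})\equiv 0$, and likewise (2) corresponds to $\det(X_v,X_{vv},X_{vvv})\equiv 0$. Two preliminary remarks make the criterion clean. First, minimal surfaces are real-analytic, so the pointwise vanishing of torsion along each coordinate leaf is equivalent to genuine planarity of that leaf, and there is no gap between the pointwise determinant condition holding on all of $\Sigma$ and the global statement. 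Second, from \eqref{eqn:min_gaussW} the acceleration $X_{uu}$ has normal component $-1\neq 0$, so every curvature line has nonvanishing curvature and the determinant criterion is genuinely an equivalence rather than merely a necessary condition.

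First I would compute the third derivatives in the moving frame $\{X_u,X_v,N\}$. Differentiating $X_{uu}=\omega_u X_u-\omega_v X_v-N$ and repeatedly substituting the Gauss--Weingarten relations for $X_{uu}$, $X_{uv}$, and $N_u$, I obtain $X_{uuu}$ expressed as a linear combination of $X_u,X_v,N$; the computation for $X_{vvv}$ is entirely parallel, using instead $X_{vv}$, $X_{uv}$, and $N_v$. The key structural observation is that this frame is orthogonal with $|X_u|^2=|X_v|^2=e^{2\omega}$ and $N$ a unit normal, so $\det(X_u,X_v,N)=\pm e^{2\omega}\neq 0$. Writing $X_u,X_{uu},X_{uuu}$ in coordinates relative to this frame, the determinant $\det(X_u,X_{uu},X_{uuu})$ factors as $\det(X_u,X_v,N)$ times the determinant of the $3\times 3$ matrix of frame coefficients.

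The final step is to evaluate this coefficient determinant. Because $X_u$ contributes the row $(1,0,0)$, the $3\times 3$ determinant collapses to a single $2\times 2$ minor, and after cancellation it reduces to exactly $-(\omega_{uv}+\omega_u\omega_v)$. Hence $\det(X_u,X_{uu},X_{uuu})=-e^{2\omega}(\omega_{uv}+\omega_u\omega_v)$, and since $e^{2\omega}\neq 0$ this gives $(1)\iff(3)$. The parallel computation yields $\det(X_v,X_{vv},X_{vvv})=-e^{2\omega}(\omega_{uv}+\omega_u\omega_v)$ as well, giving $(2)\iff(3)$ and closing the chain of equivalences.

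I do not expect a serious obstacle: the core of the argument is a finite symbolic computation driven entirely by \eqref{eqn:min_gaussW}. The only points demanding care are the conceptual reduction from ``the curvature line lies in a plane'' to the algebraic torsion condition, which is why I isolate the nonvanishing-curvature remark up front, and the bookkeeping in the third-derivative substitution, where sign errors in the coefficients of $X_v$ and $N$ would be easy to commit and must be tracked carefully.
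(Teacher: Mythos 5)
Your proposal is correct and follows essentially the same route as the paper's proof: characterize planarity by $\det(X_u,X_{uu},X_{uuu})=0$, compute $X_{uuu}$ from the Gauss--Weingarten equations \eqref{eqn:min_gaussW}, and reduce the determinant to $-e^{2\omega}(\omega_{uv}+\omega_u\omega_v)$. The only addition is your explicit justification that the curvature lines have nonvanishing curvature (normal component $-1$ of $X_{uu}$), which the paper leaves implicit but which is indeed needed for the torsion criterion to be a genuine equivalence.
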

\begin{proof}
Since $(u,v)$ is an isothermic coordinate, $u$-curvature lines are planar if and only if
$$\det(X_u, X_{uu}, X_{uuu})=0.$$
However, from \eqref{eqn:min_gaussW},
$$X_{uuu} = (\omega_{uu} + \omega_u^2 - \omega_v^2 - e^{-2\omega}) X_u + (-2\omega_u \omega_v - \omega_{uv}) X_v - \omega_u N.$$
Therefore, a $u$-curvature line is planar if and only if
$$0 = \det(X_u, X_{uu}, X_{uuu}) =
	-e^{2\omega}(\omega_{uv} + \omega_u \omega_v).$$
Similarly, \eqref{eqn:min_gaussW} implies that a $v$-curvature line is planar if and only if $ \omega_{uv} + \omega_u \omega_v = 0$.
\end{proof}
\begin{rema}
It should be noted that the condition $\omega_{uv} + \omega_u \omega_v = 0$ is equivalent to the condition $(e^\omega)_{uv} = 0$ as found in \cite{barthel1980} or \cite{blaschke1923}. Since the curvature lines of the original minimal surface correspond to asymptotic lines of its conjugate surface, the above equivalence shows that the conjugate of the minimal surface with planar curvature lines is an affine minimal surface (\cite{barthel1980}, \cite{blaschke1923}, \cite{schaal1973}, \cite{thomsen1923}).
\end{rema}

Hence, finding non-planar umbilic-free minimal surfaces with planar curvature lines is equivalent to finding solutions to the following system of partial differential equations:
	\begin{subnumcases}{\label{eqn:min_pde}}
		\Delta \omega - e^{-2\omega} = 0 \label{eqn:min_pde1} &\text(minimality condition)\\
		\omega_{uv} + \omega_u \omega_v = 0 \label{eqn:min_pde2} &\text(planar curvature line condition).
	\end{subnumcases}
Generally, solving systems of partial differential equations may prove to be difficult. However, the next lemma shows that \eqref{eqn:min_pde} can be reduced to a system of ordinary differential equations.

\begin{lemm}\label{lemm:min_solutionOmega}
The solution $\omega : \Sigma \to \mathbb{R}$ of \eqref{eqn:min_pde} is precisely given by
\begin{equation} \label{eqn:min_solutionOmega}
	e^{\omega(u,v)} = \frac{1 + f(u)^2 + g(v)^2}{f_u(u) + g_v(v)}
\end{equation}
where $f(u)$ and $g(v)$ are real-valued meromorphic functions satisfying the following system of ordinary differential equations:
	\begin{subnumcases}{\label{eqn:min_ode}}
		(f_u(u))^2 = (c - d) f(u)^2 + c \label{eqn:min_ode1}\\
		f_{uu} (u)= (c - d) f(u) \label{eqn:min_ode2}\\
		(g_v(v))^2 = (d - c) g(v)^2 + d \label{eqn:min_ode3}\\
		g_{vv}(v) = (d - c) g(v). \label{eqn:min_ode4}
	\end{subnumcases}
for some real constants $c$ and $d$ such that $c^2 + d^2 \neq 0$. Moreover, $f$ and $g$ can be recovered from $\omega$ by
\begin{equation} \label{eqn:min_recover}
	\begin{cases}
		\omega_u = e^{-\omega} f(u)\\
		\omega_v = e^{-\omega} g(v)
	\end{cases}.
\end{equation}
\end{lemm}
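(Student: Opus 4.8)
The plan is to pass to the variable $\rho := e^{\omega} > 0$ and exploit the fact, noted in the preceding remark, that the planar curvature line condition \eqref{eqn:min_pde2} is equivalent to $(e^{\omega})_{uv} = \rho_{uv} = 0$. This forces the separated structure underlying the whole statement: since $\partial_v \rho_u = 0$, the quantity $\rho_u$ is a function of $u$ alone, and likewise $\rho_v$ is a function of $v$ alone. I would therefore \emph{define} $f(u) := \rho_u$ and $g(v) := \rho_v$; with this definition the recovery formulas \eqref{eqn:min_recover} are automatic, since $\omega_u = \rho_u/\rho = e^{-\omega} f$ and $\omega_v = \rho_v/\rho = e^{-\omega} g$. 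It then remains to show that $\rho$ has the quotient form \eqref{eqn:min_solutionOmega}, that $f,g$ satisfy \eqref{eqn:min_ode}, and conversely that these data produce a solution.

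For the quotient form, I would rewrite the minimality condition \eqref{eqn:min_pde1} in terms of $\rho$. Using $\omega=\log\rho$ one has $\Delta\omega = \Delta\rho/\rho - |\nabla\rho|^2/\rho^2$, so \eqref{eqn:min_pde1} becomes $\rho\,\Delta\rho - |\nabla\rho|^2 = 1$, that is,
\[
 \rho\,(f_u + g_v) - (f^2 + g^2) = 1.
\]
Since the left side forces $f_u + g_v \neq 0$, this rearranges at once to \eqref{eqn:min_solutionOmega}; this is the only place minimality is used in producing the form.

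Next I would extract the ordinary differential equations by differentiating the relation $\rho\,(f_u+g_v) = 1 + f^2 + g^2$. Applying $\partial_u\partial_v$ and using $\rho_{uv}=0$ together with the fact that $f,f_u$ depend only on $u$ and $g,g_v$ only on $v$, the right-hand side is annihilated while the left collapses to $g\,f_{uu} + f\,g_{vv} = 0$. Hence $f_{uu}/f = -g_{vv}/g$ is simultaneously a function of $u$ and of $v$, so it equals a constant, which I name $c-d$; this gives \eqref{eqn:min_ode2} and \eqref{eqn:min_ode4}. Differentiating the same relation once in $u$ and substituting $f_{uu}=(c-d)f$ yields $(c-d)\rho = f_u - g_v$, and multiplying by $f_u+g_v$ produces $(f_u)^2 - (g_v)^2 = (c-d)(1+f^2+g^2)$. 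Separating variables once more, $(f_u)^2 - (c-d)f^2$ depends only on $u$ while the remaining terms depend only on $v$, so each side is a constant; calling it $c$ delivers \eqref{eqn:min_ode1}, and the $v$-part then forces \eqref{eqn:min_ode3}. The regularity claim is then immediate: $f,g$ are real-analytic and, as solutions of the constant-coefficient equations \eqref{eqn:min_ode2} and \eqref{eqn:min_ode4}, extend to meromorphic functions. Finally, $c=d=0$ would force $f_u=g_v=0$ and hence $-(f^2+g^2)=1$, which is impossible, so that $c^2+d^2\neq 0$ is automatic.

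For the converse, and to confirm consistency, I would start from $f,g$ satisfying \eqref{eqn:min_ode}, set $\rho$ by \eqref{eqn:min_solutionOmega}, and verify both parts of \eqref{eqn:min_pde} directly. Using the first-order equations one simplifies $1+f^2+g^2 = ((f_u)^2-(g_v)^2)/(c-d)$, so that $\rho = (f_u-g_v)/(c-d)$; this makes $\rho_{uv}=0$ transparent (the planar condition) and gives $\rho_u=f$, $\rho_v=g$, while minimality follows from $\rho\,(f_u+g_v)=1+f^2+g^2$ combined with $\rho_u=f$ and $\rho_v=g$. I expect the main obstacle to be bookkeeping rather than conceptual: keeping track of the two integration constants and pinning them down to exactly $c$ and $d$, ensuring $f_u+g_v\neq 0$ so that $\rho$ is a genuine metric, and treating the degenerate case $c=d$ (where $f,g$ become affine and the division by $c-d$ must be replaced by a direct check) separately from the generic case.
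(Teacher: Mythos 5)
Your proposal is correct and follows the same overall strategy as the paper: integrate the planar condition \eqref{eqn:min_pde2} to obtain $f=(e^{\omega})_u$, $g=(e^{\omega})_v$ (equivalently \eqref{eqn:min_recover}), then use the Liouville equation in the form $\rho\,\Delta\rho-|\nabla\rho|^2=1$ to get the quotient formula \eqref{eqn:min_solutionOmega}. Where you diverge is in extracting the ODE system: the paper differentiates $e^{-\omega}=(f_u+g_v)/(1+f^2+g^2)$ in $u$, multiplies the resulting identity by the integrating factor $2f_u$ and integrates to get $(f_u)^2-(g_v)^2=D(v)(1+f^2+g^2)$ first, and only then deduces $f_{uu}=\tilde c f$; you instead apply $\partial_u\partial_v$ to $\rho(f_u+g_v)=1+f^2+g^2$ to get $g f_{uu}+f g_{vv}=0$, separate variables to obtain the second-order equations \eqref{eqn:min_ode2}, \eqref{eqn:min_ode4} first, and then recover the first-order ones. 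Both routes rely on the same separation-of-variables mechanism and both quietly assume $f\not\equiv 0$ and $g\not\equiv 0$ at the separation step (the case where one of them vanishes identically, e.g.\ the catenoid direction, needs the same small supplementary argument in either version), so neither is more rigorous on that point. Your version buys a slightly cleaner, more symmetric computation with no integration constants to track until the very last step, and — unlike the paper, which leaves the converse as ``straightforward to check'' — you explicitly verify that data satisfying \eqref{eqn:min_ode} produce a solution of \eqref{eqn:min_pde}, including the useful observation that $\rho=(f_u-g_v)/(c-d)$ when $c\neq d$.
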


\begin{proof}
Integrating \eqref{eqn:min_pde2} with respect to $u$ and $v$ gives \eqref{eqn:min_recover} for some constants of integration $f(u)$ and $g(v)$. Using these definitions of $f$ and $g$, it is straightforward to check that \eqref{eqn:min_solutionOmega} holds.

Now, from the fact that $\omega_u e^{-\omega} = e^{-2\omega} f$,
\begin{equation} \label{eqn:min_tempOmega1}
	\frac{f_{uu}}{1 + f^2 + g^2} - \frac{f((f_u)^2 - (g_v)^2)}{1 + f^2 + g^2} = 0,
\end{equation}
and multiplying both sides by $2f_u$ and intergrating with respect to $u$ tells us that
\begin{equation} \label{eqn:min_tempOmega2}
	(f_u)^2 = (g_v)^2 + D(v)(1 + f^2 + g^2)
\end{equation}
for some constant of integration $D(v)$. Substituting \eqref{eqn:min_tempOmega2} into \eqref{eqn:min_tempOmega1}, we get $f_{uu}(u) = D(v) f(u)$ implying that $D(v) = \tilde{c}$ for some constant $\tilde{c}$. Hence,
$$f_{uu} = \tilde{c}f,$$
and again multiplying both sides by $2f_u$ and intergrating with respect to $u$ implies that
$$(f_u)^2 = \tilde{c}f^2 + c$$
for some constant $c$.

Similarly, from the fact that  $\omega_v e^{-\omega} = e^{-2\omega} g$, we can show that
$$\begin{cases}
	g_{vv} = \tilde{d}g\\
	(g_v)^2 = \tilde{d}g^2 + d
\end{cases}$$
for some constants $d$ and $\tilde{d}$.
Substituting these differential equations into \eqref{eqn:min_tempOmega1} shows that $-\tilde{d} = \tilde{c} = c - d$.
\end{proof}

To find the explicit solution for $f$, we must consider the initial conditions of $f(u)$ and $g(v)$ satisfying \eqref{eqn:min_ode}. We would like to assume $f(0) = g(0) = 0$ for simplicity; therefore, we first identify the conditions for $f(u)$ and $g(v)$ having a zero and prove that both $f(u)$ and $g(v)$ has a zero, using the next pair of lemmas.

\begin{lemm}\label{lemm:min_fzeroCondition}
$f(u)$ (resp. $g(v)$) satisfying Lemma \ref{lemm:min_solutionOmega} has a zero if and only if $c \geq 0$ (resp. $d \geq 0$).
\end{lemm}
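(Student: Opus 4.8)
The plan is to prove the two implications separately, and to treat only the statement for $f$, since the statement for $g$ follows verbatim from the symmetry of \eqref{eqn:min_ode} under the interchange $(f,c)\leftrightarrow(g,d)$ together with $(c-d)\leftrightarrow(d-c)$.

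For the ``only if'' direction the argument is immediate: if $f(u_0)=0$ for some $u_0$, then evaluating the first integral \eqref{eqn:min_ode1} at $u_0$ gives $(f_u(u_0))^2 = (c-d)\cdot 0 + c = c$, and since the left-hand side is nonnegative we conclude $c\geq 0$. No case analysis is needed here.

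For the ``if'' direction I would assume $c\geq 0$ and produce a zero by exploiting the linear equation \eqref{eqn:min_ode2}, namely $f_{uu}=(c-d)f$, whose solutions are governed by the sign of $c-d$, while using \eqref{eqn:min_ode1} to pin down the free constants. When $c-d>0$, the general solution is $f = A e^{\sqrt{c-d}\,u}+B e^{-\sqrt{c-d}\,u}$, and substituting into \eqref{eqn:min_ode1} forces $4(c-d)AB=-c\leq 0$; hence for $c>0$ the coefficients $A,B$ have opposite signs, so $f$ changes sign and vanishes by the intermediate value theorem. When $c-d=0$, the hypothesis $c^2+d^2\neq 0$ forces $c=d>0$, so that $f=\pm\sqrt{c}\,u+\beta$ is a nonconstant affine function and therefore vanishes. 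When $c-d<0$, the solution has the form $f=R\cos(\sqrt{d-c}\,u-\phi)$ with $R^2(d-c)=c$ by \eqref{eqn:min_ode1}, so $R^2=c/(d-c)\geq 0$ is consistent and, for $c>0$, the oscillation of $f$ produces zeros.

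The point requiring genuine care, and the main obstacle, is the degenerate case $c=0$: here \eqref{eqn:min_ode1} reduces to $(f_u)^2=(c-d)f^2$, and when $c-d>0$ the nonzero solution $f=Ae^{\sqrt{c-d}\,u}$ never vanishes. I would resolve this by observing that if one imposes $f(u_0)=0$ at any point, then \eqref{eqn:min_ode1} forces $f_u(u_0)=0$ as well, so uniqueness for the linear equation \eqref{eqn:min_ode2} yields $f\equiv 0$, which trivially has a zero; this is precisely the solution relevant to the normalization $f(0)=0$ used in the sequel, so the equivalence holds in the intended sense. Finally I would note that running the identical argument on \eqref{eqn:min_ode3}--\eqref{eqn:min_ode4} gives the corresponding statement for $g$ and $d$.
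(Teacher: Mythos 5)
Your overall strategy matches the paper's: the ``only if'' direction by evaluating \eqref{eqn:min_ode1} at a zero of $f$, and the ``if'' direction by solving the linear equation \eqref{eqn:min_ode2} and using the first integral \eqref{eqn:min_ode1} to constrain the coefficients. Your handling of the cases $c>0$ (for all three sign patterns of $c-d$) and $c=d$ is correct. However, your treatment of the degenerate case $c=0$ with $c-d>0$ contains a genuine gap. As you yourself observe, $f(u)=Ae^{\sqrt{c-d}\,u}$ with $A\neq 0$ satisfies both \eqref{eqn:min_ode1} and \eqref{eqn:min_ode2} with $c=0$ and never vanishes, so it is a prima facie counterexample to the ``if'' direction. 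Your proposed fix --- if one imposes $f(u_0)=0$ at some point then uniqueness gives $f\equiv 0$ --- is circular: it assumes the existence of a zero, which is exactly what the lemma must deliver. Appealing to ``the intended sense'' via the later normalization $f(0)=0$ does not repair this, because the lemma is invoked in Lemma \ref{lemm:min_fgzero} precisely in order to \emph{establish} that $f$ has a zero, before any such normalization is legitimate.

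The missing idea is that the hypotheses couple $f$ to $g$: Lemma \ref{lemm:min_solutionOmega} requires both $f$ and $g$ to be real-valued solutions of the full system \eqref{eqn:min_ode} with the same pair $(c,d)$. If $c=0$, $c\neq d$, and $f\not\equiv 0$, then \eqref{eqn:min_ode1} reads $(f_u)^2=-d\,f^2$, forcing $d\le 0$ and hence $d<0$; but then \eqref{eqn:min_ode3} gives $(g_v)^2=(d-c)g^2+d=d(1+g^2)<0$, impossible for a real-valued $g$. Thus when $c=0$ the only admissible solution is $f\equiv 0$, which trivially has a zero. This is exactly how the paper closes the case, and it is the one step your argument lacks; with it inserted, the rest of your proof goes through.
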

\begin{proof}
If $c = d$, then the statement is a result of direct computation. Now, assume $c \not = d$. First, to see that $c \geq 0$ implies that $f(u)$ has a zero, from \eqref{eqn:min_ode1} and \eqref{eqn:min_ode2}, we get
$$f(u) = C_1e^{\sqrt{c-d}\,u} + C_2e^{-\sqrt{c-d}\,u}$$
for some complex constants $C_1$ and $C_2$ such that $c = -4C_1C_2(c-d)$. If $c = 0$, then either $f(u) \equiv 0$ or \eqref{eqn:min_ode1} implies $d < 0$, a contradiction to \eqref{eqn:min_ode3}. Now assume $c > 0$. Then, $C_1$ and $C_2$ are non-zero, and we may let $C_1 = \frac{1}{2}\sqrt{\frac{c}{c-d}} = -C_2$ so that $f$ is real-valued and $f(0) = 0$ .

On the other hand, if $f(u_0) = 0$ for some $u_0$, then $(f_u(u_0))^2 = c$, implying that $c \geq 0$. Therefore, $f(u)$ has a zero if and only if $c \geq 0$. The case for $g(v)$ is proven similarly using \eqref{eqn:min_ode4}.
\end{proof}

\begin{lemm}\label{lemm:min_fgzero}
Let $f(u)$ and $g(v)$ be functions satisfying \eqref{eqn:min_ode}. Then both $f(u)$ and $g(v)$ have a zero.
\end{lemm}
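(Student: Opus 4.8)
The plan is to reduce the statement to a claim about the two constants $c$ and $d$: by \Cref{lemm:min_fzeroCondition}, $f$ has a zero exactly when $c \geq 0$ and $g$ has a zero exactly when $d \geq 0$, so it suffices to prove that both $c \geq 0$ and $d \geq 0$ hold for any real-valued $f, g$ satisfying \eqref{eqn:min_ode}. The crucial observation driving everything is that, since $f$ and $g$ are real-valued, the left-hand sides $(f_u)^2$ and $(g_v)^2$ of \eqref{eqn:min_ode1} and \eqref{eqn:min_ode3} are nonnegative at every point of $\Sigma$. I would not need the second-order equations \eqref{eqn:min_ode2} and \eqref{eqn:min_ode4}, nor the positivity of $e^\omega$; the two first-order relations already carry all the sign information.

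First I would split into cases according to the sign of $c - d$. Suppose $c - d > 0$. Then in \eqref{eqn:min_ode3} the coefficient of $g^2$ equals $d - c < 0$, so evaluating at any point gives $0 \le (g_v)^2 = d - (c-d)g^2 \le d$, whence $d \ge 0$; combining with $c - d > 0$ yields $c > d \ge 0$, so both constants are nonnegative. The case $c - d < 0$ is entirely symmetric: now the coefficient of $f^2$ in \eqref{eqn:min_ode1} is negative, so $0 \le (f_u)^2 \le c$ forces $c \ge 0$, and then $d = c + (d - c) > c \ge 0$. Finally, if $c - d = 0$, then \eqref{eqn:min_ode1} collapses to $(f_u)^2 = c$, giving $c \ge 0$, and $d = c \ge 0$ as well.

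In every case we obtain $c \ge 0$ and $d \ge 0$, and applying \Cref{lemm:min_fzeroCondition} to each then shows that $f$ and $g$ both have a zero.

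The step I expect to be the main (conceptual) obstacle is choosing, in each case, the right relation to read off: the argument works only because one extracts the bound from the equation whose quadratic coefficient is negative, so that the $g^2$- (resp.\ $f^2$-) term can be discarded in the favorable direction. Reading \eqref{eqn:min_ode1} in the case $c - d > 0$, for instance, yields no lower bound on $c$ whatsoever. Once this matching of case to equation is in place, the inequalities are immediate, and the only remaining point to keep track of is that \eqref{eqn:min_ode1} and \eqref{eqn:min_ode3} hold as pointwise identities on $\Sigma$, so that a single evaluation point suffices, covering also the degenerate situation in which $f$ or $g$ is constant.
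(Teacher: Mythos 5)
Your proof is correct and follows essentially the same route as the paper's: both reduce the claim to the sign conditions $c\ge 0$, $d\ge 0$ via \Cref{lemm:min_fzeroCondition} and then extract those signs from the nonnegativity of $(f_u)^2$ and $(g_v)^2$ in \eqref{eqn:min_ode1} and \eqref{eqn:min_ode3}. The only difference is presentational — the paper argues by contradiction (if $f$ had no zero then $c<0$, forcing $c-d>0$ and hence $d\ge 0$, which is incompatible with $d<c<0$), whereas you run the same inequalities directly as a trichotomy on the sign of $c-d$.
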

\begin{proof}
Suppose by way of contradiction that  $f$ does not have a zero. Then by the previous lemma, $c < 0$. From \eqref{eqn:min_ode1}, we see that $c < 0$ implies $c - d > 0$. However, \eqref{eqn:min_ode3} implies that if $c - d > 0$, then $d > 0$, a contradiction since $c < 0$ and $c > d$. Therefore, $f$ must always have a zero. Similarly, $g$ must have a zero, from \eqref{eqn:min_ode2} and \eqref{eqn:min_ode4}.
\end{proof}

By shifting parameters $u$ and $v$, we may assume $f(0) = g(0) = 0$. Using these initial conditions, we may solve \eqref{eqn:min_ode} to get the following:
\begin{equation} \label{eqn:min_explicitfgPM}
	\begin{aligned}
		f(u) &=
			\begin{cases}
				\pm \displaystyle\frac{\alpha}{\sqrt{\alpha^2 - \beta^2}} \sinh{(\sqrt{\alpha^2 - \beta^2}\,u)}, &\text{if }\alpha \neq \beta \\
				\pm \alpha u, &\text{if }\alpha = \beta
			\end{cases}\\
		g(v) &=
            		\begin{cases}
            			\pm \displaystyle\frac{\beta}{\sqrt{\beta^2 - \alpha^2}} \sinh{(\sqrt{\beta^2 - \alpha^2}\,v)}, &\text{if }\alpha \neq \beta \\
            			\pm \beta v, &\text{if }\alpha = \beta
            		\end{cases}
	\end{aligned}
\end{equation}
where $\alpha^2 = c$ and $\beta^2 = d$. It should be noted that by letting $u \mapsto -u$ and $v \mapsto -v$, we may drop the plus or minus condition of \eqref{eqn:min_explicitfgPM}.
Finally, we arrive at the following result.

\begin{prop}\label{prop:min_solution}
For non-planar minimal surface $X(u,v)$ with planar curvature lines, the real-analytic solution $\omega : \mathbb{R}^2 \to \mathbb{R}$ of \eqref{eqn:min_pde} is precisely given by
\begin{equation}\label{eqn:omega}
	e^{\omega(u,v)} = \frac{1 + f(u)^2 + g(v)^2}{f_u(u) + g_v(v)}
\end{equation}
with
\begin{equation} \label{eqn:min_explicitfg}
	\begin{aligned}
		f(u) &=
			\begin{cases}
				\displaystyle\frac{\alpha}{\sqrt{\alpha^2 - \beta^2}} \sinh{(\sqrt{\alpha^2 - \beta^2}\,u)}, &\text{if }\alpha \neq \beta \\
				\alpha u, &\text{if }\alpha = \beta
			\end{cases}\\
		g(v) &=
            		\begin{cases}
            			\displaystyle\frac{\beta}{\sqrt{\beta^2 - \alpha^2}} \sinh{(\sqrt{\beta^2 - \alpha^2}\,v)}, &\text{if }\alpha \neq \beta \\
            			\beta v, &\text{if }\alpha = \beta
            		\end{cases}
	\end{aligned}
\end{equation}
where $\alpha + \beta > 0$.
\end{prop}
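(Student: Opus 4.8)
The plan is to consolidate the preceding lemmas into the stated normal form and then to upgrade the local conclusion of Lemma~\ref{lemm:min_solutionOmega} to a global, normalized one; the two points that require genuine argument (as opposed to bookkeeping) are the sign normalization $\alpha+\beta>0$ and the real-analyticity of $\omega$ on all of $\mathbb{R}^2$.

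First I would invoke Lemma~\ref{lemm:min_solutionOmega} to write any solution $\omega$ of \eqref{eqn:min_pde} in the form \eqref{eqn:omega} with $f,g$ solving the system \eqref{eqn:min_ode} for constants $c,d$ satisfying $c^2+d^2\neq 0$. By Lemma~\ref{lemm:min_fgzero} both $f$ and $g$ vanish somewhere, so Lemma~\ref{lemm:min_fzeroCondition} forces $c\geq 0$ and $d\geq 0$; setting $\alpha=\sqrt c$ and $\beta=\sqrt d$ (nonnegative square roots) turns $c^2+d^2\neq 0$ into the statement that $\alpha$ and $\beta$ are not both zero. Shifting $u$ and $v$ so that $f(0)=g(0)=0$ and integrating \eqref{eqn:min_ode}, as already carried out in \eqref{eqn:min_explicitfgPM}, produces the hyperbolic-sine (or linear, when $\alpha=\beta$) expressions, and the reflections $u\mapsto -u$, $v\mapsto -v$ recorded there let me discard the sign ambiguity and arrive at \eqref{eqn:min_explicitfg}.

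Next I would fix the normalization through positivity of the conformal factor. Evaluating \eqref{eqn:omega} at the origin gives $e^{\omega(0,0)}=1/(f_u(0)+g_v(0))=1/(\alpha+\beta)$, so for $e^{\omega}$ to be a genuine (positive) metric we need $\alpha+\beta>0$; together with $\alpha,\beta\geq 0$ this is exactly the condition in the statement and it also selects the branch written in \eqref{eqn:min_explicitfg}. For the global claim it then suffices to show that the denominator $f_u+g_v$ of \eqref{eqn:omega} never vanishes on $\mathbb{R}^2$, since the numerator satisfies $1+f^2+g^2\geq 1$ and $f,g$ are entire. Differentiating \eqref{eqn:min_explicitfg}, when $\alpha\neq\beta$ the sign of $\alpha^2-\beta^2$ makes exactly one of $f_u,g_v$ a hyperbolic cosine bounded below by $\max\{\alpha,\beta\}$ and the other a cosine bounded in modulus by $\min\{\alpha,\beta\}$, so $f_u+g_v\geq|\alpha-\beta|>0$; when $\alpha=\beta$ one has $f_u+g_v\equiv 2\alpha>0$. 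Hence $f_u+g_v$ is bounded away from zero, $\omega$ is real-analytic on all of $\mathbb{R}^2$, and conversely any $\omega$ built from \eqref{eqn:min_explicitfg} solves \eqref{eqn:min_pde} because its $f,g$ satisfy \eqref{eqn:min_ode}.

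The step I expect to be the crux is this last global positivity of $f_u+g_v$: it is what promotes the a priori local solution of Lemma~\ref{lemm:min_solutionOmega} to a function on the whole plane, and it relies on the dichotomy, built into \eqref{eqn:min_explicitfg} by the sign of $\alpha^2-\beta^2$, that whenever $\alpha\neq\beta$ one of $f,g$ is exponential and the other oscillatory, so the bounded oscillating derivative can never cancel the monotone one. The remaining bookkeeping (collecting the lemmas, the shift to $f(0)=g(0)=0$, and the reflection that removes the $\pm$) is routine, and the sign normalization $\alpha+\beta>0$, while easy, must be carried through consistently so that $e^{\omega}$ is positive rather than the surface merely reparametrized.
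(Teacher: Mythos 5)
Your proposal is correct and takes essentially the same approach as the paper: the paper's proof likewise reduces everything to the positivity $f_u+g_v>0$, proved via exactly your dichotomy ($f_u=\alpha\cosh(\cdot)\geq\alpha$ while $g_v=\beta\cos(\cdot)$ is bounded in modulus by $|\beta|$), and then invokes real-analyticity of $f$ and $g$ to extend $\omega$ to all of $\mathbb{R}^2$. The consolidation of Lemmas \ref{lemm:min_solutionOmega}--\ref{lemm:min_fgzero}, the shift to $f(0)=g(0)=0$, and the removal of the $\pm$ that you spell out are carried out in the paper's surrounding discussion rather than inside the proof, and your observation that $e^{\omega(0,0)}=1/(\alpha+\beta)$ forces the sign normalization is a correct, harmless addition.
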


\begin{proof}
To see that the $\omega$ in \eqref{eqn:omega} with $f(u)$ and $g(v)$ as in \eqref{eqn:min_explicitfg} is real, we only need to show that $f_u(u) + g_v(v) > 0$ for any $(u,v) \in \Sigma$. If $\alpha = \beta$, then $f_u + g_v = \alpha + \beta > 0$. Without loss of generality, assume $\alpha > \beta$; since $\alpha + \beta > 0$, $\alpha > |\beta|$. From \eqref{eqn:min_explicitfg},
\begin{align*}
	f_u(u) &= \alpha \cosh(\sqrt{\alpha^2 - \beta^2}\,u) \geq \alpha\\
	g_v(v) &= \beta \cos(\sqrt{\alpha^2 - \beta^2}\,v) \geq -|\beta|
\end{align*}
implying that $f_u + g_v \geq \alpha - |\beta| > 0$. The case for $\alpha < \beta$ can be proved similarly. Finally, the real-analyticity of $f(u)$ and $g(v)$ tells us that the domain of $\omega(u,v)$ can be extended to $\mathbb{R}^2$ globally.

\end{proof}

Since the $u$-direction and $v$-direction of $\omega(u,v)$ depend only on $f(u)$ and $g(v)$ respectively, by choosing different values for $\alpha$ and $\beta$, we may analytically understand how the surfaces behave in either direction. The following theorem and figure explains the relationship between different values of $\alpha$ and $\beta$ and the surface generated by the corresponding $\omega(u,v)$. Note that in the figure, the subscript $u \leftrightarrow v$ denotes that the role of $u$ and $v$ are switched.

\begin{theo}\label{theo:abresch}
Let $X(u,v)$ be a non-planar minimal surface in $\mathbb{R}^3$ with isothermic coordinates $(u,v)$ such that $\diff s^2 = e^{2\omega}(\diff u^2 + \diff v^2)$. Then $X$ has planar curvature lines if and only if $\omega(u,v)$ satisfies Proposition \ref{prop:min_solution}. Furthermore, for different values of $\alpha$ and $\beta$, the metric function of $X(u,v)$ have the following properties, based on \Cref{fig:min_bifurcation}:
\begin{itemize}
	\item $\circlenum{1}$, $\circlenum{1}'$ are not periodic in the $u$-direction but periodic in the $v$-direction.
	\item $\circlenum{2}$ is not periodic in the $u$-direction but constant in the $v$-direction.
	\item $\circlenum{3}$ is not periodic in both the $u$-direction and $v$-direction.
\end{itemize}
\end{theo}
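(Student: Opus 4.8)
The first assertion is essentially a bookkeeping of results already assembled. By Lemma \ref{lemm:min_planarCondition} the planar curvature line condition is equivalent to \eqref{eqn:min_pde2}, and together with the minimality condition \eqref{eqn:min_pde1} this is exactly the system \eqref{eqn:min_pde}. Proposition \ref{prop:min_solution} then identifies its real-analytic solutions precisely as \eqref{eqn:omega} with $f$, $g$ given by \eqref{eqn:min_explicitfg}. Thus I would simply note that $X$ has planar curvature lines if and only if $\omega$ solves \eqref{eqn:min_pde}, which holds if and only if $\omega$ satisfies Proposition \ref{prop:min_solution}, reading off both implications directly from these two results.

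The periodicity statements carry the actual content, and the key structural observation is that the $u$- and $v$-dependence of
$$e^{\omega(u,v)} = \frac{1 + f(u)^2 + g(v)^2}{f_u(u) + g_v(v)}$$
decouples: the $v$-behavior of $\omega$ is governed entirely by the pair $(g(v)^2, g_v(v))$, and symmetrically the $u$-behavior by $(f(u)^2, f_u(u))$. Hence $\omega$ is periodic (resp. constant) in $v$ precisely when both $g^2$ and $g_v$ are, and likewise in $u$. I would then run a case analysis on the sign of $\alpha - \beta$. When $\alpha > \beta$, writing $\mu := \sqrt{\alpha^2 - \beta^2} > 0$, the function $f(u) = \frac{\alpha}{\mu}\sinh(\mu u)$ grows without bound, whereas substituting $\sqrt{\beta^2 - \alpha^2} = i\mu$ into \eqref{eqn:min_explicitfg} reduces $g$ to the trigonometric form $g(v) = \frac{\beta}{\mu}\sin(\mu v)$ with $g_v(v) = \beta\cos(\mu v)$, both of period $2\pi/\mu$; the case $\alpha < \beta$ is identical after exchanging $u$ and $v$, and $\alpha = \beta$ yields the linear $f(u) = \alpha u$, $g(v) = \beta v$.

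From this the three bulleted properties follow. For a non-periodic direction I would rule out periodicity by an asymptotic estimate: along the hyperbolic direction the numerator ($\sim e^{2\mu u}$) dominates the denominator ($\sim e^{\mu u}$), and along the linear direction the numerator grows quadratically while the denominator is constant, so in both cases $e^\omega \to \infty$, which is incompatible with periodicity; for the periodic direction, the common period $2\pi/\mu$ of $g^2$ and $g_v$ transfers directly to $e^\omega$ through numerator and denominator. The degenerate boundary $\beta = 0$ forces $g \equiv 0$, making $\omega$ constant in $v$, which singles out $\circlenum{2}$; the regions with $\beta > 0$ and $\alpha \neq \beta$ give $\circlenum{1}$ and its $u \leftrightarrow v$ mirror $\circlenum{1}'$, while the diagonal $\alpha = \beta$ gives $\circlenum{3}$. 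The main obstacle is not any single computation but the careful bookkeeping — correctly reducing the imaginary-argument $\sinh$ to $\sin$, tracking which of $\alpha$ and $\beta$ controls which coordinate, and turning ``grows without bound'' into a rigorous exclusion of periodicity — together with verifying that these parameter regimes are exactly the regions drawn in \Cref{fig:min_bifurcation}.
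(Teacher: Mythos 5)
Your argument is correct and is essentially the reasoning the paper leaves implicit: the theorem is stated there without a separate proof, since the equivalence is immediate from Lemma \ref{lemm:min_planarCondition} together with Proposition \ref{prop:min_solution}, and the periodicity claims follow by inspecting \eqref{eqn:min_explicitfg} exactly as you do (hyperbolic or linear growth forces $e^{\omega}\to\infty$, hence non-periodicity; the imaginary argument turns $\sinh$ into $\sin$, giving periodicity; $\beta=0$ forces $g\equiv 0$, giving constancy in $v$). One small correction to your bookkeeping: $\circlenum{1}'$ cannot be the $u\leftrightarrow v$ mirror of $\circlenum{1}$ as you suggest, since that mirror would be periodic in $u$ and non-periodic in $v$, contradicting the statement being proved --- rather, $\circlenum{1}$ and $\circlenum{1}'$ are the two sub-regions $0<\beta<\alpha$ and $\beta<0<\alpha$ (with $\alpha+\beta>0$) of the $\alpha>\beta$ regime, both periodic in $v$ because $g(v)=\tfrac{\beta}{\mu}\sin(\mu v)$ is non-constant whenever $\beta\neq 0$, while the $u\leftrightarrow v$ mirrors are labelled separately in \Cref{fig:min_bifurcation}.
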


\begin{figure}[H]
	\scalebox{0.8}{\includegraphics{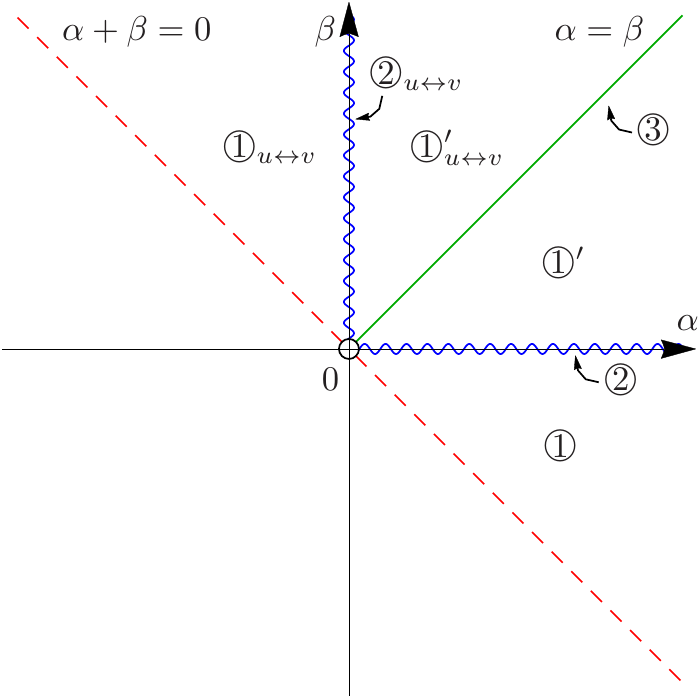}}
	\caption{Classification diagram for non-planar minimal surfaces with planar curvature lines.}
	 \label{fig:min_bifurcation}
\end{figure}

\subsection{Axial directions and normal vector}\label{subsec:min_Walter}
Through Theorem \ref{theo:abresch}, we were able to identify the non-planar minimal surfaces with planar curvature lines. In fact, we may even understand that the surfaces represented by $\circlenum{1}$, $\circlenum{2}$, or $\circlenum{3}$ in \Cref{fig:min_bifurcation} are surfaces in the Bonnet family, catenoid, or Enneper surface, respectively. However, we would like to find their parametrizations, allowing us to visualize these surfaces and obtain a deformation between them. To do this, we utilize the Weierstrass representation theorem as follows:  find the axial directions of the non-planar minimal surfaces with planar curvature lines by referring to the method developed by Walter in \cite{walter1987}, calculate the unit normal vector, and recover the Weierstrass data from the metric function $e^{\omega(u,v)}$ in Proposition \ref{prop:min_solution}. First, we show the existence of axial directions for non-planar minimal surfaces with planar curvature lines.

\begin{prop}
If $f(u)$ (resp. $g(v)$) is not identically equal to zero, then there is a unique constant direction $\vec{v}_1$  (resp. $\vec{v}_2$) such that
\begin{equation}\label{eqn:v1}
	\langle m(u,v), \vec{v}_1\rangle = \langle m_v(u,v), \vec{v}_1 \rangle = 0 \quad \text{(resp. }\langle n(u,v), \vec{v}_2\rangle = \langle n_u(u,v), \vec{v}_2 \rangle = 0\text{)}
\end{equation}
where $m = e^{-2\omega} (X_u \times X_{uu})$ (resp. $n = e^{-2\omega} (X_v \times X_{vv})$) and
$$\vec{v}_1 = \omega_{uu} X_u - \omega_{uv} X_v + \omega_u N \quad \text{(resp. }\vec{v}_2 = \omega_{uv} X_u - \omega_{vv} X_v + \omega_v N\text{).}$$
Furthermore, if $\vec{v}_1$ and $\vec{v}_2$ both exist, then $\vec{v}_1$ is orthogonal to $\vec{v}_2$. We call $\vec{v}_1$ and $\vec{v}_2$ the \emph{axial directions} of the surface.
\end{prop}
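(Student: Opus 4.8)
The plan is to work entirely in the adapted frame $\{X_u,X_v,N\}$, which is orthogonal with $|X_u|^2=|X_v|^2=e^{2\omega}$ and $|N|^2=1$, and to reduce $m$ to a transparent expression before attempting anything else. Using the Gauss--Weingarten equations \eqref{eqn:min_gaussW} together with the cross-product relations of the frame (namely $X_u\times X_v=e^{2\omega}N$, $X_u\times N=-X_v$, and $X_v\times N=X_u$), I would first establish
\[
	m=e^{-2\omega}(X_u\times X_{uu})=e^{-2\omega}X_v-\omega_v N,
	\qquad
	n=e^{-2\omega}(X_v\times X_{vv})=e^{-2\omega}X_u+\omega_u N .
\]
This converts the geometric conditions \eqref{eqn:v1} into linear conditions read off from frame components, which is what makes the remaining steps routine.

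Next I would verify that the candidate $\vec v_1=\omega_{uu}X_u-\omega_{uv}X_v+\omega_u N$ is genuinely constant, by differentiating and repeatedly substituting \eqref{eqn:min_gaussW}. When computing $(\vec v_1)_v$, the $N$-component cancels identically, the $X_u$-component vanishes after using the planar condition \eqref{eqn:min_pde2} and its $u$-derivative, and the $X_v$-component collapses to $\omega_u(\omega_{uu}+\omega_{vv}-e^{-2\omega})$, which is zero by the Liouville equation \eqref{eqn:min_pde1}. For $(\vec v_1)_u$ the $N$- and $X_v$-components again vanish using only \eqref{eqn:min_pde2} and its $u$-derivative, but the $X_u$-component does \emph{not} close up from the two PDEs alone: this is the main obstacle, and the one place where the explicit solution is essential. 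Here I expect to invoke Lemma~\ref{lemm:min_solutionOmega}, writing $\omega_u=e^{-\omega}f$, $\omega_v=e^{-\omega}g$ and using the ODEs \eqref{eqn:min_ode}. The cleanest route is the identity $g_v^2-f_u^2=(d-c)(1+f^2+g^2)$, which together with $e^{-\omega}=(f_u+g_v)/(1+f^2+g^2)$ yields $e^{-\omega}(g_v-f_u)=d-c$ and exactly cancels the surviving $(c-d)e^{-\omega}f$ term, giving $(\vec v_1)_u=0$.

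With $\vec v_1$ constant, the two conditions in \eqref{eqn:v1} reduce to one. Pairing $m$ with $\vec v_1$ and using orthogonality of the frame gives $\langle m,\vec v_1\rangle=-(\omega_{uv}+\omega_u\omega_v)=0$ by \eqref{eqn:min_pde2}, and then $\langle m_v,\vec v_1\rangle=\partial_v\langle m,\vec v_1\rangle=0$ automatically because $\vec v_1$ is constant. For uniqueness I would compute $m_v=-\omega_u e^{-2\omega}X_u+(e^{-2\omega}-\omega_{vv})N$ and observe that $m$ carries a nonzero $X_v$-component while $m_v$ carries none; hence at any point with $\omega_u\neq0$ (such points exist precisely because $f\not\equiv0$, since $\omega_u=e^{-\omega}f$) the vectors $m$ and $m_v$ are linearly independent, so their common orthogonal complement is one-dimensional. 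Any constant direction satisfying \eqref{eqn:v1} must lie in this line, which forces it to agree with $\vec v_1$; the same computation shows $\vec v_1\neq0$, as its $N$-component is $\omega_u$.

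Finally, the orthogonality $\vec v_1\perp\vec v_2$ is a one-line frame computation: the cross terms drop out, leaving
\[
	\langle \vec v_1,\vec v_2\rangle
	=e^{2\omega}\omega_{uv}(\omega_{uu}+\omega_{vv})+\omega_u\omega_v
	=\omega_{uv}+\omega_u\omega_v=0,
\]
where I use \eqref{eqn:min_pde1} to replace $\omega_{uu}+\omega_{vv}$ by $e^{-2\omega}$ and then \eqref{eqn:min_pde2}. The entire argument for $\vec v_2$ (assuming $g\not\equiv0$) is identical under the symmetry $u\leftrightarrow v$, so it need not be repeated.
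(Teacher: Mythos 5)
Your proof is correct and follows essentially the same route as the paper's, which computes $m=e^{-2\omega}X_v-\omega_v N$ and $m_v$, defines $\vec v_1$ as their (rescaled) cross product, and then asserts $(\vec v_1)_u=(\vec v_1)_v=0$ ``by calculation''; you simply carry out the calculation the paper omits. One small correction: the $X_u$-component of $(\vec v_1)_u$, namely $\omega_{uuu}+\omega_u\omega_{uu}+\omega_u\omega_v^2+\omega_u e^{-2\omega}$, \emph{does} vanish from the two PDEs alone --- differentiate \eqref{eqn:min_pde1} in $u$ to get $\omega_{uuu}+\omega_{uvv}=-2\omega_u e^{-2\omega}$ and use \eqref{eqn:min_pde2} to write $\omega_{uvv}=\omega_u\omega_v^2-\omega_u\omega_{vv}$ --- so your detour through the ODEs of Lemma \ref{lemm:min_solutionOmega} is valid but not actually necessary.
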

\begin{proof}
We will only prove the statement regarding $\vec{v}_1$. From \eqref{eqn:min_gaussW},
$$m = e^{-2\omega} X_v - \omega_v N \quad\text{and}\quad m_v = -\omega_u e^{-2\omega} X_u + \omega_{uu} N.$$
Since $f(u)$ is not identically equal to zero, let $f(u_0) \neq 0$. Then since $\omega_u(u_0,v) \neq 0$, $m$ and $m_v$ are linearly independent at $(u_0,v)$. Therefore, the following definition
$$\vec{v}_1 := e^{-2\omega}(m \times m_v) = \omega_{uu} X_u - \omega_{uv} X_v + \omega_u N$$
is well-defined on $(u_0, v)$. Furthermore, by calculation,
$$(\vec{v}_1)_u = (\vec{v}_1)_v = 0$$
for all $(u, v)$, implying that $\vec{v}_1$ is constant. It is easy to check that \eqref{eqn:v1} holds, and checking orthogonality is a straight-forward calculation using the definitions of $\vec{v}_1$ and $\vec{v}_2$.
\end{proof}


By normalizing these vectors, we may calculate the unit normal vector of the surface as follows.
\begin{prop}
Let $f(u)$ and $g(v)$ be as in Proposition \ref{prop:min_solution}. If $\alpha\beta \not = 0$, then the unit normal vector $N(u,v)$ is given by the following:
$$N(u,v) = \left(\frac{1}{\alpha}\,\omega_u, \frac{1}{\beta}\,\omega_v, \sqrt{1-\frac{1}{\alpha^2}\,\omega_u^2- \frac{1}{\beta^2}\,\omega_v^2}\right).$$
\end{prop}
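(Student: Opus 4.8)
The plan is to use the two constant axial directions $\vec{v}_1, \vec{v}_2$ furnished by the previous proposition, together with their cross product, as a \emph{fixed} orthonormal frame of $\mathbb{R}^3$, and to read off the components of $N$ in this frame. Since $\alpha\beta\neq 0$, neither $f$ nor $g$ vanishes identically (by \eqref{eqn:min_explicitfg}), so both $\vec{v}_1$ and $\vec{v}_2$ exist and are mutually orthogonal nonzero constant vectors. Once their lengths are known, normalizing them produces a frame in which the components of $N$ are essentially $\langle N, \vec{v}_1\rangle$ and $\langle N, \vec{v}_2\rangle$ divided by those lengths.

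First I would compute the two inner products. Since $\{X_u, X_v, N\}$ is orthogonal with $|X_u|^2 = |X_v|^2 = e^{2\omega}$ and $N$ a unit normal, the definition $\vec{v}_1 = \omega_{uu}X_u - \omega_{uv}X_v + \omega_u N$ gives immediately $\langle N, \vec{v}_1\rangle = \omega_u$, and likewise $\langle N, \vec{v}_2\rangle = \omega_v$. Thus the numerators $\omega_u, \omega_v$ in the claimed formula are automatic; the entire content of the statement lies in the denominators, i.e.\ in the lengths $|\vec{v}_1|$ and $|\vec{v}_2|$.

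The main computation, and the part I expect to be the crux, is to show $|\vec{v}_1| = |\alpha|$ and $|\vec{v}_2| = |\beta|$. By orthogonality of the frame, $|\vec{v}_1|^2 = e^{2\omega}(\omega_{uu}^2 + \omega_{uv}^2) + \omega_u^2$. I would rewrite everything through the recovery relations \eqref{eqn:min_recover}, $\omega_u = e^{-\omega}f$ and $\omega_v = e^{-\omega}g$, the planar condition $\omega_{uv} = -\omega_u\omega_v$, and $\omega_{uu} = e^{-\omega}f_u - e^{-2\omega}f^2$ (obtained by differentiating the first recovery relation). After clearing factors of $e^{\omega}$ via \eqref{eqn:omega}, which gives $e^{-2\omega}(1+f^2+g^2) = e^{-\omega}(f_u+g_v)$, the expression collapses to $|\vec{v}_1|^2 = f_u^2 + f^2 e^{-\omega}(g_v - f_u)$. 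The final simplification rests on the key identity $e^{-\omega}(f_u - g_v) = c - d$, which follows by writing $e^{-\omega}(f_u - g_v) = (f_u^2 - g_v^2)/(1 + f^2 + g^2)$ and invoking \eqref{eqn:min_ode1} and \eqref{eqn:min_ode3} to get $f_u^2 - g_v^2 = (c-d)(1+f^2+g^2)$. Combined with $f_u^2 = (c-d)f^2 + c$, this yields $|\vec{v}_1|^2 = c = \alpha^2$; the computation for $\vec{v}_2$ is symmetric and gives $|\vec{v}_2|^2 = d = \beta^2$.

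Finally I would assemble the frame. Setting $e_1 = \vec{v}_1/\alpha$, $e_2 = \vec{v}_2/\beta$, and $e_3 = e_1\times e_2$ produces an orthonormal basis of $\mathbb{R}^3$, valid since $\vec{v}_1\perp\vec{v}_2$ while $|\vec{v}_1|=|\alpha|$ and $|\vec{v}_2|=|\beta|$. Expressing $N$ in this basis, its first two components are $\langle N, \vec{v}_1\rangle/\alpha = \omega_u/\alpha$ and $\langle N, \vec{v}_2\rangle/\beta = \omega_v/\beta$, while the third is forced by $|N|=1$ to equal $\sqrt{1 - \omega_u^2/\alpha^2 - \omega_v^2/\beta^2}$ up to sign, giving the stated formula after fixing orientation. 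The only points needing care are the sign and root choices and confirming that this frame is globally well-defined, both of which follow from the constancy of the axial directions.
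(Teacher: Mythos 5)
Your proof is correct, and it takes a genuinely different route from the paper's. The paper fixes $\vec v_1=\mathbf e_1$ and then treats the orthogonality relations $\langle m,\vec v_1\rangle=\langle m_v,\vec v_1\rangle=0$ as first-order differential equations for the component $N_1$: integrating each (using the planar condition $\omega_{uv}+\omega_u\omega_v=0$) gives $N_1=B\,\omega_u$ for a constant $B$, which is then pinned down to $1/\alpha$ by evaluating $\|X_u\|^2$ along the line $u=0$, where $\omega_u$ vanishes and $X_u$ is forced into $\spann\{\mathbf e_1\}$. You never integrate: you read off $\langle N,\vec v_1\rangle=\omega_u$ directly from the expression of $\vec v_1$ in the orthogonal frame $\{X_u,X_v,N\}$, and reduce the whole proposition to the algebraic identity $|\vec v_1|^2=c=\alpha^2$. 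I checked that computation and it is right: $|\vec v_1|^2=e^{2\omega}(\omega_{uu}^2+\omega_{uv}^2)+\omega_u^2=f_u^2+f^2e^{-\omega}(g_v-f_u)$, and the key identity $e^{-\omega}(f_u-g_v)=c-d$ follows exactly as you say from \eqref{eqn:omega}, \eqref{eqn:min_ode1} and \eqref{eqn:min_ode3}. Your version buys a cleaner, purely algebraic argument and yields the geometric byproduct $|\vec v_1|=|\alpha|$, $|\vec v_2|=|\beta|$, which the paper never makes explicit. The one loose end is the sign of the third component: your frame determines $N_3$ only up to the orientation of $e_3=e_1\times e_2$, as you acknowledge, but the paper's appeal to $|N|=1$ leaves exactly the same normalization implicit, so this is not a gap relative to the original.
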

\begin{proof}
First we normalize the axial direction and set $\vec{v}_1 = \mathbf{e}_1$, the unit vector in the $x_1$-direction. Then, from $\langle m, \vec{v}_1 \rangle = 0$, we get
\begin{equation} \label{eqn:min_normal1}
	-\omega_v N_1 - (N_1)_v = 0,
\end{equation}
where $N = (N_1, N_2, N_3)$. Now, using \eqref{eqn:min_pde2} and integrating with respect to $v$, we obtain
$$N_1 = B_1(u)\cdot\omega_u$$
for some function $B_1(u)$. Similarly, from $\langle m_v, \vec{v}_1 \rangle = 0$, we get
$$N_1 = B_2(v)\cdot\omega_u$$
for some function $B_2(v)$.
Therefore, $B_1(u) = B_2(v) = B$ for some constant $B$, and
$$N_1 = B\cdot\omega_u.$$

To compute $B$, first note that since $\omega_u(0,v) = e^{\omega(0,v)}f(0) = 0$, we note that $\langle m, X_u \rangle = \langle m_v, X_u \rangle = 0$ on $(0, v)$. Therefore, $X_u(0,v) \in \spann\{\mathbf e_1\}$, and
$$e^{2\omega} = \|X_u(0,v)\|^2 = ((X_1(0,v))_v)^2 = e^{2\omega}B^2\alpha^2.$$
Hence,
$$B = \frac{1}{\alpha} \quad\text{and}\quad N_1 = \frac{1}{\alpha}\,\omega_u.$$

Similarly, by setting $\vec{v}_2 = \mathbf{e}_2$, we get
$$N_2 = \frac{1}{\beta}\,\omega_v.$$
Finally, using the fact that $N$ is a unit normal vector gives us the desired result.
\end{proof}

Using the normal vector, we may now calculate the Weierstrass data. Since the meromorphic function $h$ is the normal vector function under stereographic projection,
$$h^{(\alpha,\beta)}(u,v) =  \frac{1}{1-N_3}\,(N_1 + i N_2)  = \frac{\sqrt{\alpha^2-\beta^2}}{\alpha - \beta}\tanh{\left(\frac{\sqrt{\alpha^2-\beta^2}}{2}\,(u + i v)\right)},$$
while since $Q = -\frac{1}{2}(h_u + i h_v)\eta = -\frac{1}{2}$, we also have
$$\eta^{(\alpha,\beta)}(u,v) = \frac{1}{h_u + i h_v} = \frac{1}{\alpha + \beta} \cosh^2{\left(\frac{\sqrt{\alpha^2 - \beta^2}}{2}(u + i v)\right)}$$
for $\alpha + \beta > 0$. If we let $\alpha = r \cos\theta$ and $\beta = r \sin\theta$, then it is easy to see that $r$ is a homothety factor. Therefore, we may assume $r = 1$, and rewrite $h^{(\alpha,\beta)}(u,v)$ and $\eta^{(\alpha,\beta)}(u,v)$ as follows:
\begin{equation}\label{eqn:min_w-data}
	\begin{aligned}
		h^{\theta}(u,v) &=
			\begin{cases}
				\dfrac{\sqrt{\cos{(2\theta)}}}{\cos\theta - \sin\theta} \tanh{\left(\dfrac{\sqrt{\cos{(2\theta)}}}{2}(u + i v)\right)}, &\text{if } \theta \neq \frac{\pi}{4} \\[12 pt]
				\dfrac{u + i v}{\sqrt{2}}, &\text{if } \theta = \frac{\pi}{4}
			\end{cases}\\
		\eta^{\theta}(u,v) &=
			\begin{cases}
				\dfrac{1}{\cos\theta + \sin\theta}\cosh^2\left(\dfrac{\sqrt{\cos{(2\theta)}}}{2}(u + i v)\right), &\text{if } \theta \neq \frac{\pi}{4} \\[12 pt]
				\dfrac{1}{\sqrt{2}}, &\text{if } \theta = \frac{\pi}{4}
			\end{cases}
	\end{aligned}
\end{equation}
where $\theta \in \left(-\frac{\pi}{4},\frac{3\pi}{4}\right)$. Since $h^\theta$ is meromorphic, and $\eta^\theta$ is holomorphic such that $(h^\theta)^2\eta^\theta$ is holomorphic, we may use the Weierstrass representation theorem to obtain the following parametrizations for minimal surfaces with planar curvature lines.
\begin{prop}
Let $X(u,v)$ be a non-planar minimal surface with planar curvature lines in $\mathbb{R}^3$. Then $X$ must have the following parametrization
\begin{equation}\label{eqn:min_parametrization}
	X^{\theta}(u,v) =
    	\begin{cases}
    		\begin{pmatrix}
                		\dfrac{u \cos{\theta}\sqrt{\cos{2\theta}} - \sin{\theta}\sinh{(u\sqrt{\cos{2\theta}})}\cos{(v\sqrt{\cos{2\theta}}})}{(\cos{2\theta})^{3/2}} \\[9pt]
                		\dfrac{v \sin{\theta}\sqrt{\cos{2\theta}} - \cos{\theta}\cosh{(u\sqrt{\cos{2\theta}}})\sin{(v\sqrt{\cos{2\theta}})}}{(\cos{2\theta})^{3/2}} \\[9pt]
                		\dfrac{\cosh{(u\sqrt{\cos{2\theta}})}\cos{(v\sqrt{\cos{2\theta}}})-1}{\cos{2\theta}}\\
                	\end{pmatrix}^t, &\text{if }\theta \neq \frac{\pi}{4} \\[40pt]
		\left(-\dfrac{u (-6 + u^2 - 3v^2)}{6\sqrt{2}}, \dfrac{v(-6 - 3u^2 + v^2)}{6\sqrt{2}}, \dfrac{u^2 - v^2}{2}\right), &\text{if }\theta = \frac{\pi}{4}
    	\end{cases}
\end{equation}
for some $\theta \in \left(-\frac{\pi}{4},\frac{3\pi}{4}\right)$ on its domain up to isometries and homotheties.
\end{prop}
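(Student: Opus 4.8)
The plan is to assemble the parametrization directly from the Weierstrass representation theorem, since all the necessary ingredients are already in hand: the metric function from Proposition~\ref{prop:min_solution}, the unit normal, and the explicit Weierstrass data $(h^\theta, \eta^\theta\diff z)$ in \eqref{eqn:min_w-data}. Thus this final step is purely computational: substitute $h^\theta$ and $\eta^\theta$ into
$$X^\theta = \mathrm{Re}\int\left(1 - (h^\theta)^2,\, i(1 + (h^\theta)^2),\, 2h^\theta\right)\eta^\theta\,\diff z,$$
integrate in $z = u + iv$, and extract the real part, fixing the constant of integration so that $X^\theta(0,0) = 0$ (legitimate since the statement is only up to isometries, and the homothety factor $r$ was already normalized to $1$).

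First I would handle the generic case $\theta \neq \frac{\pi}{4}$. Writing $w := \sqrt{\cos 2\theta}$, $a := \cos\theta + \sin\theta$, and $b := \cos\theta - \sin\theta$, the crucial algebraic fact is $ab = \cos 2\theta = w^2$, which collapses the products $(h^\theta)^2\eta^\theta$ and $h^\theta\eta^\theta$ into clean hyperbolic expressions: using $\tanh^2(\tfrac w2 z)\cosh^2(\tfrac w2 z) = \sinh^2(\tfrac w2 z)$ one finds $(h^\theta)^2\eta^\theta = \tfrac{1}{b}\sinh^2(\tfrac w2 z)$ and $h^\theta\eta^\theta = \tfrac{1}{2w}\sinh(wz)$. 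Rewriting $\cosh^2 = \tfrac12(1 + \cosh wz)$ and $\sinh^2 = \tfrac12(\cosh wz - 1)$, the three integrands reduce to $\tfrac{1}{w^2}(\cos\theta - \sin\theta\cosh wz)$, $\tfrac{i}{w^2}(\cos\theta\cosh wz - \sin\theta)$, and $\tfrac{1}{w}\sinh wz$, respectively. Each integrates elementarily in $z$, and taking real parts with $\mathrm{Re}(\sinh wz) = \sinh wu\cos wv$, $\mathrm{Im}(\sinh wz) = \cosh wu\sin wv$, and $\mathrm{Re}(\cosh wz) = \cosh wu\cos wv$ yields exactly the three components of \eqref{eqn:min_parametrization}.

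Next I would treat $\theta = \frac{\pi}{4}$, where $h^\theta = z/\sqrt 2$ and $\eta^\theta = 1/\sqrt2$. Here the integrands are polynomials in $z$, so the integration produces the cubic expressions in \eqref{eqn:min_parametrization} after using $\mathrm{Re}(z^2) = u^2 - v^2$ and $\mathrm{Re}(z^3) = u(u^2 - 3v^2)$; alternatively, this case is recovered as the $w \to 0$ limit of the generic formulas via Taylor expansion. A point requiring care is the range $\theta \in (\frac\pi4, \frac{3\pi}4)$, where $\cos 2\theta < 0$ makes $w$ imaginary. Writing $w = i\mu$ with $\mu = \sqrt{-\cos 2\theta}$ and using $\sinh(i\mu\,\cdot) = i\sin(\mu\,\cdot)$ and $\cos(i\mu\,\cdot) = \cosh(\mu\,\cdot)$, one checks that the imaginary units cancel against the factors $w^2$ and $w^3$ in the denominators, so the expressions in \eqref{eqn:min_parametrization} remain real-valued throughout $(-\frac\pi4, \frac{3\pi}4)$.

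I expect the computation itself to be routine; the main things to get right are the repeated use of the identity $ab = w^2$ to simplify the products \emph{before} integrating, and the bookkeeping of reality across the sign change of $\cos 2\theta$. The only genuinely conceptual point is the translation normalization: fixing $X^\theta(0,0) = 0$ pins down the constants of integration, which is harmless since the conclusion is stated up to rigid motions and scalings.
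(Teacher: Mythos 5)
Your proposal is correct and follows exactly the route the paper intends: the paper states this proposition without proof, simply asserting that it follows from substituting the Weierstrass data \eqref{eqn:min_w-data} into the Weierstrass representation formula, which is precisely the computation you carry out (and your intermediate simplifications, e.g.\ the reduction of the first integrand to $\tfrac{1}{w^2}(\cos\theta - \sin\theta\cosh wz)$ via $ab = w^2$, check out). Your added care about reality of the formulas for $\cos 2\theta < 0$ and the normalization $X^\theta(0,0)=0$ supplies details the paper leaves implicit.
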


\section{Continuous deformation of minimal surfaces with planar curvature lines} \label{sec:deformation}
In the previous section, we obtained the Weierstrass data and the parametrizations of non-planar minimal surfaces in $\mathbb{R}^3$ with planar curvature lines; in fact, the Weierstrass data and the parametrizations of such surfaces depended on a single parameter $\theta$. In this section, we show that this parameter defines a locally continuous deformation between non-planar minimal surfaces preserving the planar curvature line condition. Furthermore, we show that by introducing a suitable homothety factor depending on $\theta$, we may also extend the deformation to include the plane.

First, we show that the deformation of minimal surfaces in $\mathbb{R}^3$ with planar curvature lines we obtain by using the parameter $\theta$ is continuous. By ``continuous'', we mean that the deformation converges uniformally over compact subdomains component-wise. To show this, it is enough to show that each component function in the parametrization is continuous for all $\theta$ at any point $(u,v)$ in the domain. The continuity is self-evident at any $\theta \neq \frac{\pi}{4}$; hence, we only need to check the for the case $\theta = \frac{\pi}{4}$.

However, for the Weierstrass data of minimal surfaces with planar curvature lines as stated in \eqref{eqn:min_w-data}, it is easy to check that at any point $(u,v)$,
$$\lim_{\theta \to \frac{\pi}{4}} h^\theta(u,v) = h^{\frac{\pi}{4}}(u,v) \quad\text{and}\quad \lim_{\theta \to \frac{\pi}{4}} \eta^\theta(u,v) = \eta^{\frac{\pi}{4}}(u,v).$$
In addition, each component of the parametrization in \eqref{eqn:min_parametrization} is also continuous at $\theta = \frac{\pi}{4}$ at any point $(u,v)$, as
$$\lim_{\theta \to \frac{\pi}{4}} X^\theta(u,v) = X^{\frac{\pi}{4}}(u,v).$$
Therefore, $X^\theta(u,v)$ is a continuous deformation.

Now, we would like to extend $X^\theta(u,v)$ to include the plane. To do so, we define the homotethy factor $R^\theta$ as follows:
$$R^\theta = \left(1-\sin\left(\theta + \frac{\pi}{4}\right)\right)|\cos 2\theta| + \sin\left(\theta + \frac{\pi}{4}\right)$$
for $\theta \in [-\frac{\pi}{4},\frac{3\pi}{4}]$. Note that $R^\theta \geq 0$, and $R^\theta = 0$ if and only if $\theta$ is at an endpoint of its domain.

If we consider
$$\tilde X^\theta(u,v) = R^\theta X^\theta(u,v),$$
then
$$\lim_{\theta \searrow -\frac{\pi}{4}} \tilde X^\theta(u,v) = \frac{3}{\sqrt{2}}(u, -v, 0) = \lim_{\theta \nearrow \frac{3\pi}{4}} \tilde X^\theta(u,v).$$
Therefore, the extension of $X^\theta(u, v)$ to $\tilde X^\theta(u,v)$ defined as
$$\tilde X^\theta (u,v) =
	\begin{cases}
		\frac{3}{\sqrt{2}}(u, -v, 0), &\text{if }\theta = -\frac{\pi}{4}, \frac{3\pi}{4}\\
		R^\theta X^\theta(u,v), &\text{if }\theta \in \left(-\frac{\pi}{4}, \frac{3\pi}{4}\right)
	\end{cases}$$
is again, a continuous deformation for $\theta \in [-\frac{\pi}{4},\frac{3\pi}{4}]$.

In conclusion, we obtain the following classification and deformation of minimal surfaces with planar curvature lines.
\begin{theo}\label{theo:min_main}
If $\tilde X(u,v)$ is a minimal surface with planar curvature lines in $\mathbb{R}^3$, then the surface is given by the following parametrization on its domain
$$
	\tilde X^{\theta}(u,v) =
    	\begin{cases}
    		R^\theta\begin{pmatrix}
                		\dfrac{u \cos{\theta}\sqrt{\cos{2\theta}} - \sin{\theta}\sinh{(u\sqrt{\cos{2\theta}})}\cos{(v\sqrt{\cos{2\theta}}})}{(\cos{2\theta})^{3/2}} \\[9pt]
                		\dfrac{v \sin{\theta}\sqrt{\cos{2\theta}} - \cos{\theta}\cosh{(u\sqrt{\cos{2\theta}}})\sin{(v\sqrt{\cos{2\theta}})}}{(\cos{2\theta})^{3/2}} \\[9pt]
                		\dfrac{\cosh{(u\sqrt{\cos{2\theta}})}\cos{(v\sqrt{\cos{2\theta}}})-1}{\cos{2\theta}}\\
                	\end{pmatrix}^t, &\text{if }\theta \neq -\frac{\pi}{4},\frac{\pi}{4},\frac{3\pi}{4} \\[37pt]
		\left(-\dfrac{u (-6 + u^2 - 3v^2)}{6\sqrt{2}}, \dfrac{v(-6 - 3u^2 + v^2)}{6\sqrt{2}}, \dfrac{u^2 - v^2}{2}\right), &\text{if }\theta = \frac{\pi}{4} \\[9pt]
		\dfrac{3}{\sqrt{2}}(u, -v, 0), &\text{if }\theta = -\frac{\pi}{4}, \frac{3\pi}{4}
    	\end{cases}
$$
up to isometries and homotheties of $\mathbb{R}^3$ for some $\theta \in \left[-\frac{\pi}{4},\frac{3\pi}{4}\right]$, where $R^\theta = \left(1-\sin\left(\theta + \frac{\pi}{4}\right)\right)|\cos 2\theta| + \sin\left(\theta + \frac{\pi}{4}\right)$.
In fact, it must be a piece of one, and only one, of the following:
\begin{itemize}
	\item plane $(\theta = -\frac{\pi}{4}, \frac{3\pi}{4})$,
	\item catenoid $(\theta = 0, \frac{\pi}{2})$,
	\item Enneper's surface $(\theta = \frac{\pi}{4})$, or
	\item a surface in the Bonnet family $(\theta \in (-\frac{\pi}{4}, \frac{3\pi}{4})\setminus \{0, \frac{\pi}{4}, \frac{\pi}{2}\})$.
\end{itemize}
Moreover, the deformation $\tilde X^\theta(u,v)$ depending on the parameter $\theta$ is continuous (see \Cref{fig:deformation1}).
\end{theo}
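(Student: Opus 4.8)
The plan is to assemble the results established in \Cref{sec:minimal} together with the continuity and homothety discussion preceding the statement, so that the proof reduces to organizing these pieces and settling the few points that require genuine verification. First I would establish the parametrization for the non-planar surfaces: by \Cref{theo:abresch} every non-planar minimal surface with planar curvature lines has metric function given by \Cref{prop:min_solution}, and the normal vector and Weierstrass data computations then yield the parametrization \eqref{eqn:min_parametrization}, namely $X^\theta(u,v)$ for some $\theta \in (-\frac{\pi}{4}, \frac{3\pi}{4})$, up to isometries and homotheties. Since $R^\theta > 0$ for every interior $\theta$, multiplying by $R^\theta$ is itself a homothety, so $\tilde X^\theta = R^\theta X^\theta$ represents the same surface and the first branch of the claimed formula holds; the Enneper branch at $\theta = \frac{\pi}{4}$ is exactly the separately computed expression, for which $R^{\pi/4} = 1$.

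Next I would pin down which named surface occurs for which $\theta$, and the ``one and only one'' assertion. Writing $\alpha = \cos\theta$ and $\beta = \sin\theta$ as in the derivation of \eqref{eqn:min_w-data}, the bifurcation diagram of \Cref{fig:min_bifurcation} separates the surfaces by the periodicity of $\omega$ recorded in \Cref{theo:abresch}: the vanishing of $\beta$ (at $\theta = 0$) or of $\alpha$ (at $\theta = \frac{\pi}{2}$) gives the constant-in-one-direction case, the catenoid; the equality $\alpha = \beta$ (at $\theta = \frac{\pi}{4}$) gives Enneper's surface; and the remaining interior values give the Bonnet family. To match these to the canonical Weierstrass data $(e^z, e^{-z}\diff z)$, $(z, 1\diff z)$, and $\{(e^z + t, e^{-z}\diff z)\}$ of Nitsche's classification, I would exhibit an explicit Möbius reparametrization of $h^\theta$ with the corresponding adjustment of $\eta^\theta$ carrying \eqref{eqn:min_w-data} into the canonical form; the ``$u \leftrightarrow v$'' symmetry noted in \Cref{fig:min_bifurcation} then accounts for the catenoid, and each Bonnet surface, arising at two values of $\theta$ related by reflection. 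Pairwise non-congruence of plane, catenoid, Enneper, and the Bonnet family is exactly Nitsche's classification, which gives the ``only one'' statement.

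Finally I would treat the continuity of $\tilde X^\theta$ and the inclusion of the plane. Continuity at every $\theta \neq -\frac{\pi}{4}, \frac{\pi}{4}, \frac{3\pi}{4}$ is immediate from the explicit formula, and continuity at $\theta = \frac{\pi}{4}$ is the limit $\lim_{\theta \to \pi/4} X^\theta = X^{\pi/4}$ already recorded before the statement. The essential remaining point is the behaviour at the two endpoints, which I would obtain by a careful asymptotic expansion: setting $\theta = -\frac{\pi}{4} + \varepsilon$ one has $\cos 2\theta \sim 2\varepsilon$, so each component of $X^\theta$ diverges like $\varepsilon^{-1}$ while $R^\theta \sim 3\varepsilon$, and the leading singular terms cancel to leave $\lim_{\theta \searrow -\pi/4} \tilde X^\theta = \frac{3}{\sqrt 2}(u, -v, 0)$, with the analogous computation at $\theta = \frac{3\pi}{4}$. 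This limit is a genuine plane, so defining $\tilde X^\theta$ at the endpoints by it makes $\theta \mapsto \tilde X^\theta$ continuous on all of $[-\frac{\pi}{4}, \frac{3\pi}{4}]$; since the convergence of $h^\theta$, $\eta^\theta$ and of each component is uniform on compact subdomains, the deformation is continuous in the stated sense.

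I expect the main obstacle to be this endpoint analysis. Because $X^\theta$ itself blows up as $\cos 2\theta \to 0$, the existence of a finite limit depends on the precise choice of $R^\theta$, and one must expand numerators and denominators to the correct order in $\varepsilon$ (using the Taylor expansions of $\sinh$ and $\cosh$ together with the leading behaviour $|\cos 2\theta| \sim 2\varepsilon$ and $\sin(\theta + \frac{\pi}{4}) \sim \varepsilon$) to confirm that the $\varepsilon^{-1}$ singularities cancel against $R^\theta$. The secondary technical point is producing the explicit reparametrizations identifying \eqref{eqn:min_w-data} with the canonical Weierstrass data for each surface type, which is routine complex analysis but must be carried out case by case to justify both the named-surface correspondence and the Bonnet-family identification.
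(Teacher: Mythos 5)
Your proposal is correct and follows essentially the same route as the paper: the theorem is stated there as a summary of Section 2 (metric function, axial directions, Weierstrass data, parametrization $X^\theta$) combined with the continuity and endpoint-limit computations for $R^\theta X^\theta$ carried out immediately before the statement, which is exactly how you organize the argument. Your endpoint asymptotics ($\cos 2\theta \sim 2\varepsilon$, $R^\theta \sim 3\varepsilon$) check out, and the only extra ingredient you add beyond the paper is the explicit M\"obius reparametrization matching \eqref{eqn:min_w-data} to Nitsche's canonical Weierstrass data, which the paper leaves implicit via the bifurcation diagram.
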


\begin{figure}[H]
	\scalebox{0.9}{\includegraphics{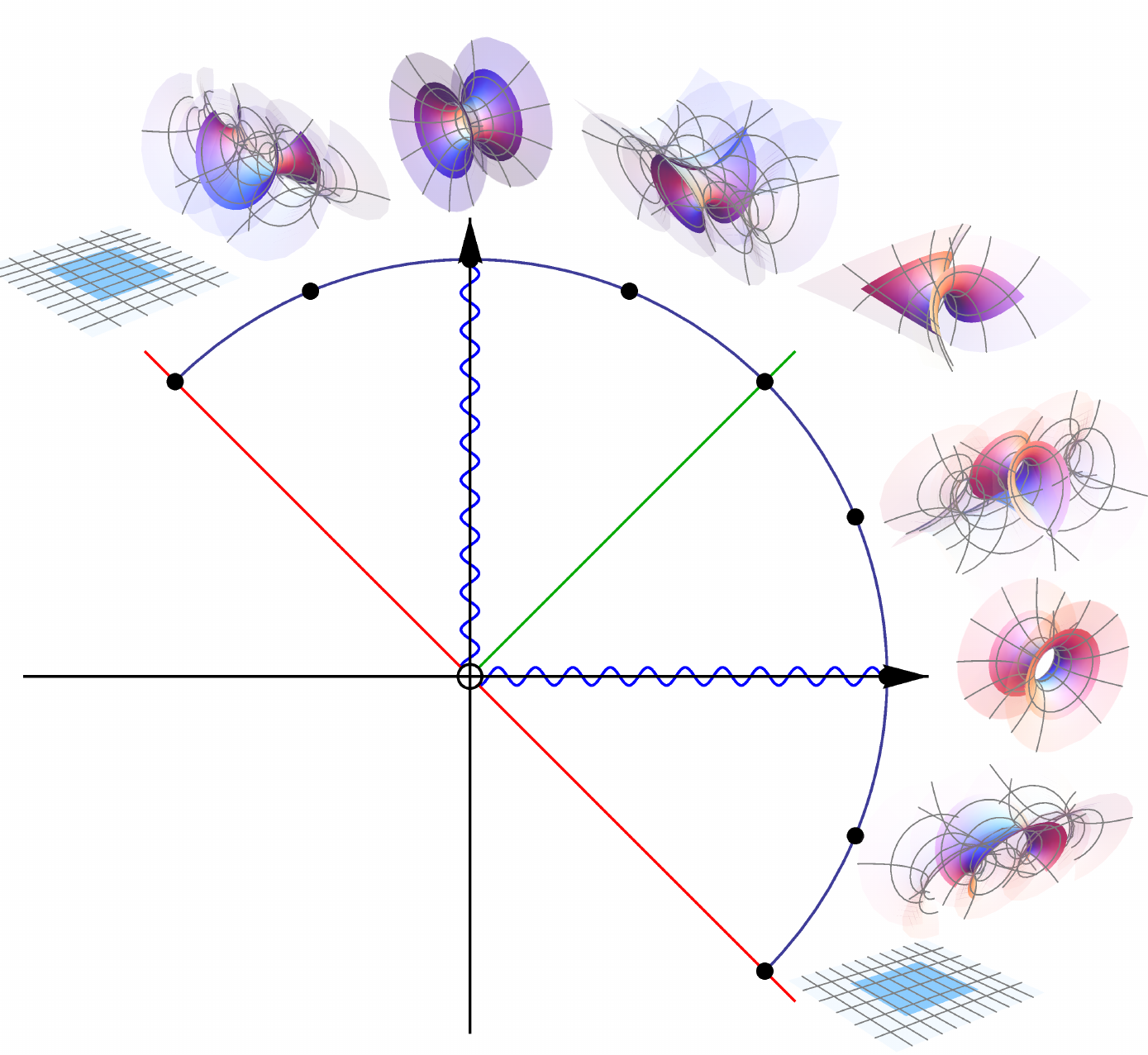}}
	\caption{Deformation of minimal surfaces with planar curvature lines with parametrization as in Theorem \ref{theo:min_main}.}
	\label{fig:deformation1}
\end{figure}

Furthermore, by considering the conjugate of minimal surfaces with planar curvature lines, we get the following classification and deformation of minimal surfaces that are also affine minimal.

\begin{coro}\label{coro:thomsen}
If $\hat X(u,v)$ is a minimal surface that is also an affine minimal surface in $\mathbb{R}^3$, then the surface is given by the following parametrization on its domain
$$
	\hat X^{\theta}(u,v) =
    	\begin{cases}
    		R^\theta\begin{pmatrix}
                		\dfrac{-u \cos{\theta}\sqrt{\cos{2\theta}} + \sin{\theta}\cosh{(u\sqrt{\cos{2\theta}})}\sin{(v\sqrt{\cos{2\theta}}})}{(\cos{2\theta})^{3/2}} \\[9pt]
                		\dfrac{v \sin{\theta}\sqrt{\cos{2\theta}} - \cos{\theta}\sinh{(u\sqrt{\cos{2\theta}}})\cos{(v\sqrt{\cos{2\theta}})}}{(\cos{2\theta})^{3/2}} \\[9pt]
                		\dfrac{-\sinh{(u\sqrt{\cos{2\theta}})}\sin{(v\sqrt{\cos{2\theta}}})}{\cos{2\theta}}\\
                	\end{pmatrix}^t, &\text{if }\theta \neq -\frac{\pi}{4},\frac{\pi}{4},\frac{3\pi}{4} \\[37pt]
		\left(-\dfrac{v (6 - 3u^2 + v^2)}{6\sqrt{2}}, \dfrac{u(6 + u^2 - 3v^2)}{6\sqrt{2}}, -uv\right), &\text{if }\theta = \frac{\pi}{4} \\[9pt]
		\dfrac{3}{\sqrt{2}}(-v, -u, 0), &\text{if }\theta = -\frac{\pi}{4}, \frac{3\pi}{4}
    	\end{cases}
$$
In fact, it must be a piece of one, and only one, of the following:
\begin{itemize}
	\item plane $(\theta = -\frac{\pi}{4}, \frac{3\pi}{4})$,
	\item helicoid $(\theta = 0, \frac{\pi}{2})$,
	\item Enneper surface $(\theta = \frac{\pi}{4})$, or
	\item a surface in the Thomsen family $(\theta \in (-\frac{\pi}{4}, \frac{3\pi}{4})\setminus \{0, \frac{\pi}{4}, \frac{\pi}{2}\})$.
\end{itemize}
up to isometries and homotheties of $\mathbb{R}^3$ for some $\theta \in \left[-\frac{\pi}{4},\frac{3\pi}{4}\right]$.

Moreover, the deformation $\hat X^\theta(u,v)$ depending on the parameter $\theta$ is continuous (see \Cref{fig:deformation2}).
\end{coro}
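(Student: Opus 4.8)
The plan is to deduce the corollary from \Cref{theo:min_main} by passing to conjugate surfaces, using the correspondence recorded in the Remark after \Cref{lemm:min_planarCondition}: a minimal surface is affine minimal exactly when its conjugate is a minimal surface with planar curvature lines. So the whole statement should follow by conjugating each member of the family already classified in \Cref{theo:min_main}.

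First I would upgrade this correspondence to a bijection. If $\hat X$ is minimal and affine minimal, then its conjugate $\hat X^{*}$ is again minimal, and because the asymptotic lines of $\hat X$ are the curvature lines of $\hat X^{*}$, the affine-minimal condition $(e^{\omega})_{uv}=0$ on $\hat X$ is precisely the planar curvature line condition \eqref{eqn:min_pde2} on $\hat X^{*}$; hence $\hat X^{*}$ is one of the surfaces classified in \Cref{theo:min_main}. Since conjugation is an involution up to a sign, $\hat X$ is in turn the conjugate of $\hat X^{*}=X^{\theta}$ for some $\theta\in[-\tfrac{\pi}{4},\tfrac{3\pi}{4}]$, up to isometry and homothety, which already yields the claimed list via the identifications below.

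Next I would compute this conjugate explicitly. Because $X^{\theta}=\mathrm{Re}\int(1-(h^{\theta})^{2},\,i(1+(h^{\theta})^{2}),\,2h^{\theta})\,\eta^{\theta}\diff z$ with the Weierstrass data \eqref{eqn:min_w-data}, its conjugate is obtained by integrating the same holomorphic one-form and taking the imaginary part (equivalently, by the substitution $\lambda^{-2}=i$ in the associated family); concretely, $\hat X^{\theta}$ is characterized by $\hat X^{\theta}_{u}=-X^{\theta}_{v}$ and $\hat X^{\theta}_{v}=X^{\theta}_{u}$. Integrating the closed-form $X^{\theta}$ of \Cref{theo:min_main} then produces the stated parametrization, carrying along the homothety factor $R^{\theta}$. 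The distinguished values are identified by direct substitution: $\theta=0,\tfrac{\pi}{2}$ give the conjugate of the catenoid, namely the helicoid; $\theta=\tfrac{\pi}{4}$ gives the conjugate of Enneper's surface, again a congruent copy of Enneper's surface; the endpoints $\theta=-\tfrac{\pi}{4},\tfrac{3\pi}{4}$ give the plane; and the remaining parameters give the Thomsen family by definition.

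Finally, continuity of $\theta\mapsto\hat X^{\theta}$ is inherited from that of $\theta\mapsto X^{\theta}$ proved in \Cref{sec:deformation}, since taking the harmonic conjugate and multiplying by the continuous factor $R^{\theta}$ both preserve componentwise uniform convergence on compact sets; the limits at $\theta=\tfrac{\pi}{4}$ and at the endpoints are evaluated exactly as there. I expect the main obstacle to be the explicit integration in the third step: one must pin down the ambient isometry that brings the conjugate into the stated closed form, handle the degenerate locus $\cos 2\theta=0$ (the Enneper case $\theta=\tfrac{\pi}{4}$) where the hyperbolic and trigonometric expressions collapse to their polynomial limits, and fix the orientation and coordinate conventions carefully so that, for instance, the conjugate of the catenoid is recovered as the helicoid rather than as an intrinsically isometric but non-congruent surface.
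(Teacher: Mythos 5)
Your proposal is correct and follows essentially the same route as the paper: the paper gives no separate proof for the corollary but derives it precisely by taking the conjugate surface ($\lambda^{-2}=i$ in the associated family) of each $\tilde X^{\theta}$ from Theorem \ref{theo:min_main}, using Thomsen's correspondence between the affine minimal condition $(e^{\omega})_{uv}=0$ and the planar curvature line condition on the conjugate. Your explicit characterization $\hat X^{\theta}_{u}=-X^{\theta}_{v}$, $\hat X^{\theta}_{v}=X^{\theta}_{u}$ and the inheritance of continuity through the factor $R^{\theta}$ are exactly the intended computations.
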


\begin{figure}[H]
	\scalebox{1}{\includegraphics{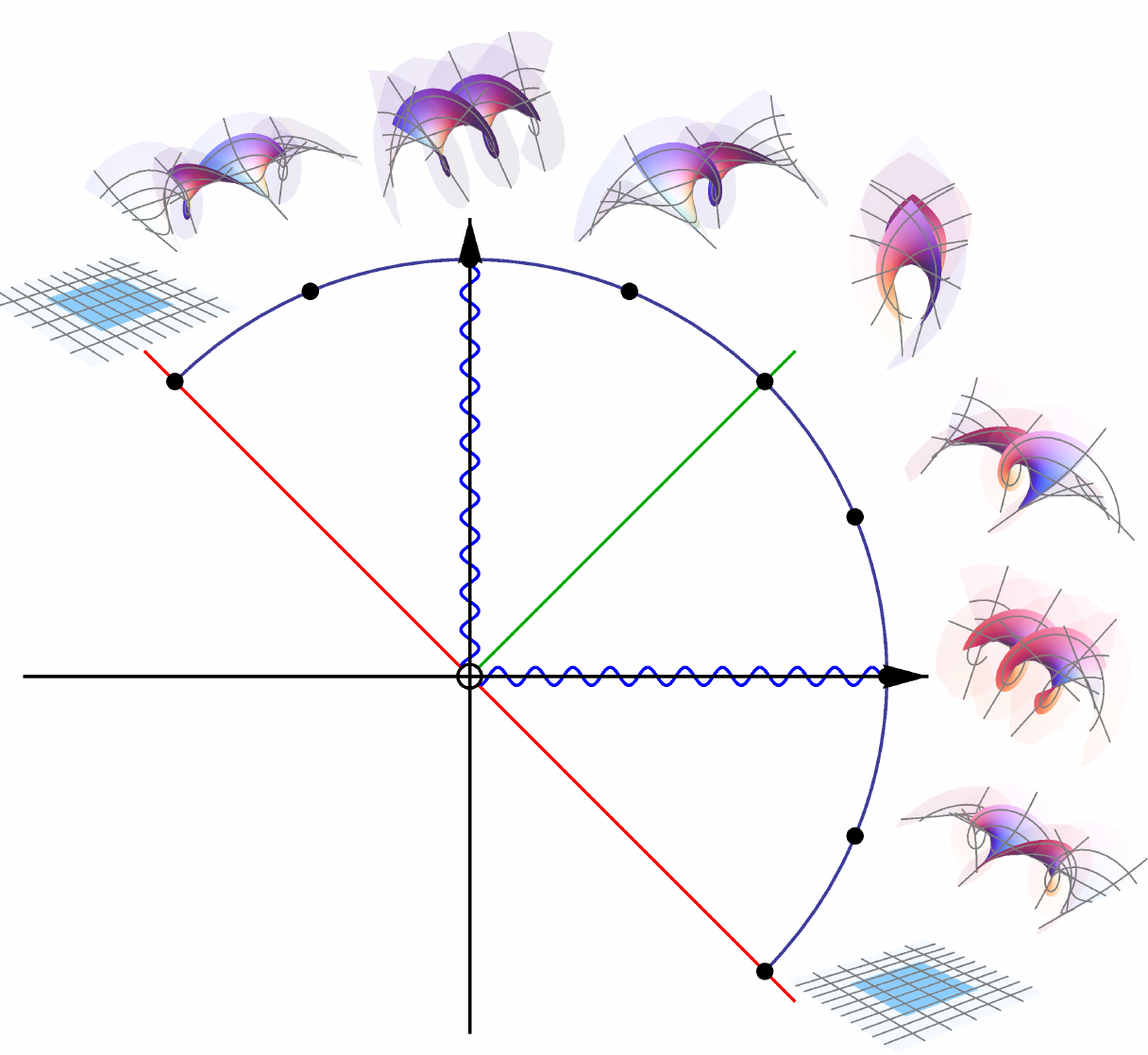}}
	\caption{Deformation of minimal surfaces that are also affine minimal with parametrization as in the corollary to Theorem \ref{theo:min_main}.}
	\label{fig:deformation2}
\end{figure}

\section{Final remarks}
With appropriate modifications, the analytic method in conjunction with axial directions introduced in this paper can be applied to study maximal surfaces with planar curvature lines in Lorentz-Minkowski space $\mathbb{R}^{2,1}$. The complete classification is already given by Leite in \cite{leite2015}, where she developed and used the orthogonal systems of circles on the hyperbolic plane. The analytic method can also be used to fully classify such surfaces; however, using the analytic method further allows us to see that all maximal surfaces with planar curvature lines can be joined by a single deformation. In addition, using this method allows us to understand that maximal Bonnet-type surfaces can be classified into three general types analytically, and five specific types by means of singularity theory. We will defer a complete discussion of these properties to our subsequent work \cite{ogata2016}.

\vspace{15pt}
{\bf Acknowledgements.} The authors express their gratitude to Professor Shoichi Fujimori, Professor Hitoshi Furuhata, and Professor Wayne Rossman for many useful comments.

\nocite{*}

\bibliography{Joseph-Ogata}{}
\bibliographystyle{abbrv}

\end{document}